\newtheorem{theorem}{{\bf Theorem}}
\newtheorem{corollary}[theorem]{{\bf Corollary}}
\newtheorem{example}[theorem]{{\bf Example}}
\newtheorem{lemma}[theorem]{{\bf Lemma}}
\newtheorem{proposition}[theorem]{{\bf Proposition}}
\newtheorem{remark}[theorem]{{\bf Remark}}
\newtheorem{question}{{\bf Question}}
\begin{document}



\title{Constructions of $d$-spheres from $(d-1)$-spheres and $d$-balls with same set of vertices}

\author{Basudeb Datta}

\affil{Department of Mathematics, Indian Institute of Science, Bangalore 560\,012, India.
 dattab@iisc.ac.in.}

\date{June 30, 2020}

\maketitle

\vspace{-10mm}

\begin{abstract}
Given a combinatorial $(d-1)$-sphere $S$, to construct a combinatorial $d$-sphere $S^{\hspace{.2mm}\prime}$ containing $S$, one usually needs some more vertices. Here we consider the question whether we can do one such construction without the help of any additional vertices. We show that this question has  affirmative answer when $S$ is  a flag sphere, a stacked sphere or a join of spheres. We also consider the question whether we can construct an $n$-vertex combinatorial $d$-sphere containing a given $n$-vertex combinatorial $d$-ball. 
\end{abstract}

\noindent {\small {\em MSC 2020\,:} 57Q15, 57C15, 57D05.

\noindent {\em Keywords:} Combinatorial sphere; Polytopal sphere; Stacked sphere.}

\section{Introduction}

If $S$ is an $n$-vertex combinatorial $(d-1)$-sphere then the join $S^0_2\ast S$ is an $(n+2)$-vertex combinatorial $d$-sphere and $S\subset S^0_2\ast S$. We also know that there is an $(n+1)$-vertex combinatorial $d$-sphere $S^{\hspace{.2mm}\prime}$ (namely, an one-point suspension of $S$) such that $S\subset S^{\hspace{.1mm}\prime}$.
In the open problem session of the Oberwolfach workshop on `Geometric, Algebraic and Topological Combinatorics' (\cite[Open Problem No. 12]{MFO2019}), Francisco Santos asked the following  

\begin{question}  \label{qn:sphere}
Given a combinatorial $(d-1)$-sphere  $S$ with $n \geq d+2$ vertices, does there always exist an $n$-vertex combinatorial $d$-sphere $S^{\hspace{.1mm}\prime}$  with $S \subset S^{\hspace{.1mm}\prime}$?
\end{question}

This of course implies that both $S$ and $S^{\hspace{.2mm}\prime}$ have same vertex-set.
Since the number of vertices in a $d$-sphere is at least $d+2$, $n$ must be at least $d+2$. 
Since a $0$-sphere consists of two points,  $n\geq d+2$  implies that $d-1 \geq 1$.  
Recently,  Criado and Santos observed (see \cite[Proof of Theorem 3.5]{CS2020}) that Question \ref{qn:sphere} has affirmative answer if $S$ is a polytopal sphere. They showed  

\begin{theorem}  \label{th:polytopal}
Let $S$ be an $n$-vertex polytopal $(d-1)$-sphere. If $n\geq d+2$ then $S$ is a subcomplex of an $n$-vertex polytopal  $d$-sphere.
\end{theorem}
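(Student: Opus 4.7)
The plan is to realize $S$ as $\partial P$ for a simplicial $d$-polytope $P\subset\mathbb{R}^d$ with vertex set $V(P)=\{v_0,\dots,v_{n-1}\}$, then embed $P$ in $\mathbb{R}^{d+1}$ and lift vertices to produce the desired simplicial $(d+1)$-polytope $Q$. Concretely, identify $\mathbb{R}^d$ with $\{x_{d+1}=0\}\subset\mathbb{R}^{d+1}$, fix $\epsilon>0$ and generic $0<\epsilon_i\ll\epsilon$ for $i=1,\dots,n-1$, and set $\tilde v_0:=v_0+\epsilon\, e_{d+1}$, $\tilde v_i:=v_i+\epsilon_i\, e_{d+1}$, and $Q:=\operatorname{conv}\{\tilde v_0,\dots,\tilde v_{n-1}\}$. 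The hypothesis $n\ge d+2$, combined with simpliciality of $P$, forces removing any single vertex to leave a set affinely spanning $\mathbb{R}^d$: otherwise $P$ would be a pyramid with apex $v_0$ over a simplicial base, but such a base must have $d$ vertices, contradicting $n-1\ge d+1$. Thus the lifts span $\mathbb{R}^{d+1}$, so $Q$ is a $(d+1)$-polytope; and any supporting hyperplane of $P$ isolating $v_i$ lifts to one isolating $\tilde v_i$ for $\epsilon_i$ small, so $Q$ has exactly $n$ vertices.

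The crux is the claim: for every facet $G$ of $P$ with $v_0\notin G$, the $d$-simplex $\sigma_G:=\{\tilde v_0\}\cup\{\tilde u:u\in G\}$ is an upper facet of $\partial Q$. I would verify this by exhibiting the supporting hyperplane $H_G=\{z=h(x)\}$, where $h$ is the unique affine function on $\mathbb{R}^d$ with $h(v_0)=\epsilon$ and $h(u)=\epsilon_u$ for $u\in G$. Letting $\alpha_G$ be the facet-defining affine form (vanishing on $G$, positive on the other vertices of $P$), a direct barycentric calculation gives $h(w)=\frac{\alpha_G(w)}{\alpha_G(v_0)}\,\epsilon+O(\max_i\epsilon_i)$ for any $w\in V(P)\setminus(G\cup\{v_0\})$. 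Since $\alpha_G(w)>0$ and $\epsilon_w\ll\epsilon$, one obtains $\epsilon_w<h(w)$, so $\tilde w$ lies strictly below $H_G$; hence $H_G$ supports $Q$ exactly along $\sigma_G$.

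Granting the claim, every facet $F$ of $\partial P$ becomes a $(d-1)$-face of $\partial Q$ under the identification $v_i\leftrightarrow\tilde v_i$. If $v_0\notin F$, take $G=F$ and observe $F\subset\sigma_F$. If $v_0\in F$, write $F=\{v_0\}\cup F'$; since $\partial P$ is a simplicial $(d-1)$-sphere, the $(d-2)$-face $F'$ lies in exactly one other facet $F''=\{w\}\cup F'$ with $w\ne v_0$, so $v_0\notin F''$ and the $(d-1)$-simplex $\{\tilde v_0\}\cup F'\subset\sigma_{F''}$ is a face of $\partial Q$. Therefore $\partial P$ embeds as a subcomplex of $\partial Q$, as required.

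The main obstacle is showing $Q$ is simplicial. With the perturbation turned off ($\epsilon_i=0$ for $i\ge 1$), the ``bottom'' of $Q$ is the non-simplicial facet $\operatorname{conv}(V(P)\setminus\{v_0\})$, so $Q$ fails to be simplicial in general. The generic small heights $\epsilon_i$ are there precisely to triangulate this bottom via the lower envelope: by the standard fact that generic lifting heights induce regular simplicial subdivisions, for sufficiently generic $\epsilon_i\ll\epsilon$ the lower envelope of $Q$ becomes a simplicial regular subdivision of $\operatorname{conv}(V(P))$, while continuity of the $H_G$ above preserves each $\sigma_G$ as a facet of the upper envelope. Together the two envelopes make $\partial Q$ a simplicial $d$-sphere containing $\partial P$ as a subcomplex, completing the construction.
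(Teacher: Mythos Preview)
Your argument is correct and takes a genuinely different route from the paper's. The paper works combinatorially: after perturbing the vertices of $P$ into general position (Lemma~\ref{lem:simplicial}) it deletes a vertex $v$, observes that $\widetilde P=\operatorname{conv}(V(P)\setminus\{v\})$ is again a simplicial $d$-polytope with $\operatorname{ast}_S(v)\subset\partial\widetilde P$, and then takes the one-point suspension $S'=\Sigma_{u,v}(\partial\widetilde P)$, appealing to Lemma~\ref{lem:1pt} for polytopality. You instead build the ambient $(d+1)$-polytope $Q$ directly by a lifting construction and locate $S$ inside its upper envelope.

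The paper's approach is shorter and fits the one-point-suspension mechanism that also drives Theorem~\ref{th:flag}\,(b) and (c); it gives a clean combinatorial description of $S'$ and pushes the polytopality question onto the known Lemma~\ref{lem:1pt}. Your construction is more explicit on the geometric side---the polytope $Q$ is produced in coordinates---and it trades the general-position hypothesis on $P$ for genericity of the lifting heights. One point you leave implicit deserves a sentence: you show each $\sigma_G$ is an upper facet, but simpliciality of $Q$ needs that these are \emph{all} the upper facets. This follows because the cones $\operatorname{conv}(\{v_0\}\cup G)$, over facets $G$ of $P$ with $v_0\notin G$, form the pulling triangulation of $P$ at $v_0$ and hence cover $P$; since the upper envelope projects bijectively onto $P$, there is no room for any further upper facet. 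With that remark (and the routine observation that simpliciality of $P$ forbids vertical facets of $Q$), your argument is complete.
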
 

As a consequence of Theorem \ref{th:polytopal} we get

\begin{corollary} \label{cor:stacked}
Let $X^{\hspace{.2mm}d-1}$ be an $n$-vertex polytopal $(d-1)$-sphere. Then there exists a sequence $X^{\hspace{.2mm}d-1}  \subset \cdots \subset X^{\hspace{.2mm}n-2} = S^{\hspace{.2mm}n-2}_n$ of $n$-vertex polytopal spheres of dimensions $d-1, \dots, n-2$ respectively.
\end{corollary}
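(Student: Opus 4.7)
The plan is to build the chain by induction on dimension, applying Theorem~\ref{th:polytopal} iteratively. Take $X^{d-1}$ as the base case. Suppose inductively that an $n$-vertex polytopal $k$-sphere $X^k$ has already been produced for some $k$ with $d-1 \leq k \leq n-3$. Since $n \geq k+3 = (k+1)+2$ for such $k$, Theorem~\ref{th:polytopal} (applied with its ``$d$'' set to $k+1$) yields an $n$-vertex polytopal $(k+1)$-sphere $X^{k+1}$ containing $X^k$. Iterating from $k = d-1$ up through $k = n-3$ then produces the chain $X^{d-1} \subset X^d \subset \cdots \subset X^{n-2}$ of $n$-vertex polytopal spheres of the stated dimensions.

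It remains to identify the terminal sphere $X^{n-2}$ with $S^{n-2}_n = \partial \Delta^{n-1}$. This follows from the well-known fact that every $n$-vertex combinatorial $(n-2)$-sphere is the boundary of the $(n-1)$-simplex. A short induction on $n$ confirms this: the link of any vertex $v$ in such a sphere is an $(n-3)$-sphere on at most $n-1$ vertices, and since any $(n-3)$-sphere requires at least $(n-3)+2 = n-1$ vertices, the link has exactly $n-1$ vertices and so equals $\partial \Delta^{n-2}$ by the inductive hypothesis. Thus every $(n-1)$-subset of vertices containing $v$ is a facet; letting $v$ vary shows that all $n$ facets of $\partial \Delta^{n-1}$ appear, forcing $X^{n-2} = \partial \Delta^{n-1} = S^{n-2}_n$.

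I do not anticipate any genuine obstacle: the corollary is essentially a direct consequence of iterating Theorem~\ref{th:polytopal}, together with the uniqueness of the minimal triangulation of a sphere. The only bookkeeping is verifying the inequality $n \geq k+3$ at each step, which is also what pins the construction to terminate precisely at dimension $n-2$.
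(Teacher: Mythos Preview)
Your proof is correct and is precisely the argument the paper has in mind: the corollary is stated without proof as an immediate consequence of Theorem~\ref{th:polytopal}, and your iteration of that theorem together with the standard fact that the only $n$-vertex combinatorial $(n-2)$-sphere is $S^{\,n-2}_n$ is exactly the intended unpacking.
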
 

Since Combinatorial 2-spheres are polytopal, from Theorem \ref{th:polytopal} we get 

\begin{corollary}  \label{cor:d=2}
Let $S$ be an $n$-vertex combinatorial $2$-sphere. If $n\geq 5$ then $S$ is a subcomplex of an $n$-vertex polytopal $3$-sphere.
\end{corollary}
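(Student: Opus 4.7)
The plan is extremely short because this corollary is essentially an immediate specialization of Theorem \ref{th:polytopal} to dimension $d=3$, using the classical fact (Steinitz's theorem) that every combinatorial $2$-sphere is polytopal, i.e., isomorphic to the boundary complex of some convex $3$-polytope.

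Specifically, I would first invoke Steinitz's theorem to conclude that the given $n$-vertex combinatorial $2$-sphere $S$ is in fact polytopal. Then I would set $d=3$ in Theorem \ref{th:polytopal}: the hypothesis $n \geq d+2$ becomes $n \geq 5$, which is exactly the assumption of the corollary, and the conclusion is that $S$ sits as a subcomplex of some $n$-vertex polytopal $3$-sphere. That is precisely the statement we want.

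There is essentially no obstacle here beyond correctly citing Steinitz's theorem. The only place where one might pause is making sure that the usage of ``polytopal'' in the statement is the same one used in Theorem \ref{th:polytopal} (namely, the boundary complex of a convex polytope of one higher dimension), and that Steinitz's theorem indeed yields this exact notion for combinatorial $2$-spheres. Once that identification is made, the corollary reduces to a one-line application of the previously quoted result.
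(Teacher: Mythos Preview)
Your proposal is correct and matches the paper's approach exactly: the paper derives this corollary immediately from Theorem \ref{th:polytopal} by noting that combinatorial $2$-spheres are polytopal (via Steinitz's theorem), then specializing to $d=3$.
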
 

We show that Question \ref{qn:sphere} has affirmative answer in some more cases. We prove

\begin{theorem}  \label{th:flag}
Let $S$ be an $n$-vertex combinatorial $(d-1)$-sphere. Then $S$ is a subcomplex of an $n$-vertex combinatorial $d$-sphere in each of the following cases. 
\begin{enumerate}[{$(a)$}]
\item $S$ is a join of two or more spheres. 
\item  $S$ has a vertex of degree $d$ (i.e., a vertex of minimum degree), and $n\geq d+2$. 
\item $S$ is a flag sphere. 
\end{enumerate}
\end{theorem}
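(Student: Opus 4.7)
My plan uses one core construction throughout. For any combinatorial $k$-sphere $X$ on a vertex set $V$ and any $x\in V$, the antistar $\mathrm{del}_X(x):=\{\sigma\in X:x\notin\sigma\}$ is a combinatorial $k$-ball with boundary $\lk{X}{x}$; hence the cone $x\ast\mathrm{del}_X(x)$ is a $(k+1)$-ball on $V$ whose boundary $\mathrm{del}_X(x)\cup(x\ast\lk{X}{x})=\mathrm{del}_X(x)\cup\st{X}{x}$ equals $X$. Thus every combinatorial sphere bounds a combinatorial $(k+1)$-ball on its own vertex set, and this is the uniform building block in all three parts.

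For (a), grouping factors reduces us to $S=S_1\ast S_2$ with $d_i:=\dim S_i$ and $d_1+d_2=d-2$. Pick $v_i\in V(S_i)$ and set $B_i:=v_i\ast\mathrm{del}_{S_i}(v_i)$, a $(d_i+1)$-ball on $V(S_i)$ with $\partial B_i=S_i$. Then $B_1\ast B_2$ is a $(d+1)$-ball on $V(S)=V(S_1)\sqcup V(S_2)$, and its boundary $\partial(B_1\ast B_2)=(S_1\ast B_2)\cup(B_1\ast S_2)$ is a $d$-sphere on $V(S)$ containing $S=S_1\ast S_2\subseteq S_1\ast B_2$.

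For (b), write $\lk{S}{v}=\partial\sigma_0$ where $\sigma_0$ is the $(d-1)$-simplex on the $d$ neighbors of $v$. A short pseudomanifold argument using $n\geq d+2$ shows $\sigma_0\notin S$, so $B':=\mathrm{del}_S(v)\cup\{\sigma_0\}$ is a $(d-1)$-sphere on $V(S)\setminus\{v\}$. Then $v\ast B'$ is a $d$-ball on $V(S)$ with boundary $B'$, and it contains the sub-ball $v\ast\mathrm{del}_S(v)$ whose boundary is $S$. Choose any $w\in V(B')$ and form a second $d$-ball $C:=w\ast\mathrm{del}_{B'}(w)$ on $V(B')$ with $\partial C=B'$. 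Since $v\notin V(C)$, the two $d$-balls meet exactly in $B'$, so $S^*:=(v\ast B')\cup C$ is a $d$-sphere on $V(S)$ containing $S$.

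For (c) I would induct on $d$. The base $d=2$ is immediate since flag $1$-spheres are cycles $C_n$ ($n\geq 4$), which are polytopal, so Theorem~\ref{th:polytopal} applies. For $d\geq 3$, fix $v\in V(S)$; the link $L:=\lk{S}{v}$ is a flag $(d-2)$-sphere on $\deg(v)\geq 2(d-1)\geq d+1$ vertices (flag-ness is inherited by links), so by induction $L$ extends to a $(d-1)$-sphere $L^*$ on $V(L)$. A simplicial Jordan--Brouwer argument separates $L^*$ along the $(d-2)$-subsphere $L$ into two combinatorial $(d-1)$-balls $L_1,L_2$ with $L_1\cap L_2=L$. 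Set $B':=\mathrm{del}_S(v)\cup L_1$; the intersection check $\mathrm{del}_S(v)\cap L_1=L$ uses flag-ness (any simplex of $\mathrm{del}_S(v)$ with all vertices in $V(L)$ forms a clique with $v$, hence is a face of $S$, hence already lies in $L$). Thus $B'$ is a $(d-1)$-sphere on $V(S)\setminus\{v\}$, and the same ball-union construction from (b) produces the desired $d$-sphere $S^*=(v\ast B')\cup(w\ast\mathrm{del}_{B'}(w))$ on $V(S)$ containing $v\ast\mathrm{del}_S(v)\supseteq S$. The main obstacle is this last step: verifying $\mathrm{del}_S(v)\cap L_1=L$ via flag-ness and cleanly invoking simplicial Jordan--Brouwer for the hemispheres.
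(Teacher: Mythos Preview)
Your arguments for (a) and (b) are correct and, modulo presentation, coincide with the paper's: in (a) the paper also glues $D_1\ast S_2$ to $S_1\ast D_2$ (your $\partial(B_1\ast B_2)$), and in (b) your $S^*=(v\ast B')\cup(w\ast\mathrm{del}_{B'}(w))$ is precisely the one-point suspension $\Sigma_{w,v}(B')$ the paper uses.

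For (c) there is a genuine gap, exactly at the point you flag as ``the main obstacle.'' What you call simplicial Jordan--Brouwer for the hemispheres is the PL Schoenflies theorem: a locally flat PL $(k-1)$-sphere in a PL $k$-sphere bounds a PL $k$-ball on each side. This is classical for $k\le 3$ and known for $k\ge 5$, but for $k=4$ it is equivalent to the smooth $4$-dimensional Schoenflies problem, which is open. In your induction this case arises when $d=5$: then $L=\lk{S}{v}$ is a flag $3$-sphere, the inductive hypothesis produces a $4$-sphere $L^*\supset L$ on $V(L)$, and you cannot conclude that the two sides of $L$ in $L^*$ are combinatorial $4$-balls. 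So as written the argument does not go through for flag $4$-spheres $S$.

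The paper avoids this entirely, and the fix is already implicit in the ``uniform building block'' you state at the outset. You only ever use one hemisphere $L_1$, and all you need from it is: $L_1$ is a combinatorial $(d-1)$-ball with $\partial L_1=L$ and $V(L_1)\subseteq V(L)$. But such a ball is available for \emph{any} combinatorial sphere $L$, with no induction and no Schoenflies: pick $u\in V(L)$ and take $L_1:=u\ast\mathrm{del}_L(u)$. Your flagness computation $\mathrm{del}_S(v)\cap L_1=L$ goes through verbatim (it only uses $V(L_1)\subseteq V(L)$), and then $B':=\mathrm{del}_S(v)\cup L_1$ and $S^*:=(v\ast B')\cup(w\ast\mathrm{del}_{B'}(w))$ work exactly as you wrote. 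This is precisely the paper's construction (their $D_3$ is your $L_1$, their $S_2$ is your $B'$, and their Remark~\ref{remark:flag} identifies $S'$ with $\Sigma_{u,v}(S_2)$). In short: drop the induction and the appeal to Schoenflies, and replace ``hemisphere of $L^*$'' by the explicit cone $u\ast\mathrm{del}_L(u)$.
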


\begin{theorem}  \label{th:stacked}
Let $S$ be an $n$-vertex stacked $(d-1)$-sphere. If $n\geq d+2$ then $S$ is a subcomplex of an $n$-vertex stacked $d$-sphere.
\end{theorem}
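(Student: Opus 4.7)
The plan is to proceed by induction on $n$.

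For the base case $n = d+2$, choose $S' = \partial\Delta^{d+1}$ on the vertex set of $S$. This $S'$ is a stacked $d$-sphere (the boundary of the single $(d+1)$-simplex, viewed as a stacked $(d+1)$-ball). Every simplex of the $(d-1)$-dimensional complex $S$ is a proper face of $\Delta^{d+1}$, hence $S\subset\partial\Delta^{d+1} = S'$.

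For the inductive step ($n \geq d+3$), the first task is to locate a vertex $v\in S$ of degree exactly $d$. Such a $v$ exists in any stacked $(d-1)$-sphere on more than $d+1$ vertices: writing $S = \partial B$ for a stacked $d$-ball $B$ assembled by successively attaching $d$-simplices along $(d-1)$-faces, the apex of the simplex attached last has exactly $d$ neighbours in $\partial B = S$. At such $v$, $\lk{S}{v} = \partial\sigma$ for the $(d-1)$-simplex $\sigma$ on the $d$ neighbours of $v$. Define $S_0$ by replacing $\st{S}{v}$ inside $S$ with $\sigma$ (together with all its faces); then $S_0 = \partial(B\setminus\{v\ast\sigma\})$ is a stacked $(d-1)$-sphere on the $n-1$ vertices of $S$ other than $v$.

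Apply the inductive hypothesis to $S_0$ to obtain an $(n-1)$-vertex stacked $d$-sphere $S'_0 \supset S_0$. The $(d-1)$-simplex $\sigma \in S_0 \subset S'_0$ is a facet of exactly two $d$-simplices of $S'_0$; pick one of them, call it $\tau$, and stellar-subdivide $\tau$ at the new vertex $v$ to obtain an $n$-vertex stacked $d$-sphere $S'$. To verify $S\subset S'$: the simplices in $S_0\setminus\{\sigma\}$ persist in $S'$, since none of them equals the $d$-simplex $\tau$; and the remaining simplices needed to recover $S$, namely those of $\st{S}{v} = v\ast\partial\sigma$, also lie in $S'$, because $\sigma$ is a facet of $\tau$ (so $\partial\sigma\subset\partial\tau$) and the subdivision introduces $v\ast\rho$ for every proper face $\rho$ of $\tau$, whence $v\ast\partial\sigma \subset v\ast\partial\tau \subset S'$.

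The only non-routine ingredient is the existence of a degree-$d$ vertex $v$ in $S$ whose removal yields a stacked $(d-1)$-sphere on $n-1$ vertices; this is a direct consequence of the recursive definition of a stacked ball. With this in hand, the rest of the argument is the natural ``un-subdivide on the sphere side, apply induction, re-subdivide on the ball side'' strategy, mirroring the constructive definition of stacked spheres.
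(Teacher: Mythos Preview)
Your proof is correct but takes a different route from the paper's. The paper proceeds via Lemma~\ref{lem:stacked}: since $S=\partial B$ for a stacked $d$-ball $B$ with $V(B)=V(S)$, peel off the last-added facet to write $B=C\cup\overline{\sigma_m}$ with $\sigma_m=\tau_m\cup\{u\}$ and $C=B_{m-1}$; then $B':=\{u\}\ast C$ is directly seen to be a stacked $(d+1)$-ball on $V(S)$, and $S':=\partial B'$ is the desired stacked $d$-sphere (in fact it contains not just $S$ but all of $B$). No induction is used. Your argument instead inducts on $n$: collapse a degree-$d$ vertex to reach a smaller stacked sphere $S_0$, invoke the hypothesis to get $S_0'$, and stellar-subdivide a suitable facet of $S_0'$ to restore $v$. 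Both approaches hinge on the same structural fact (a leaf of the dual tree of the stacked ball supplies a degree-$d$ boundary vertex), but the paper's one-shot coning construction yields the stronger conclusion that the ball itself embeds, while your inductive method stays closer to the recursive description of stacked spheres via successive facet subdivisions. One minor notational quibble: $B\setminus\{v\ast\sigma\}$ is not literally a simplicial complex (the proper faces of $v\ast\sigma$ containing $v$ remain); what you mean is the induced subcomplex on $V(B)\setminus\{v\}$, i.e.\ $B_{m-1}$, and with that reading the claim $S_0=\partial B_{m-1}$ is correct.
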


As a consequence of Theorem \ref{th:stacked} we get

\begin{corollary} \label{cor:stacked}
Let $X^{\hspace{.2mm}d - 1}$ be an $n$-vertex stacked $(d-1)$-sphere. Then there exists a sequence $X^{\hspace{.2mm}d-1} \subset  \cdots \subset X^{\hspace{.2mm}n-2} = S^{\hspace{.2mm}n-2}_n$ of $n$-vertex stacked spheres of dimensions $d-1,  \dots, n-2$ respectively.
\end{corollary}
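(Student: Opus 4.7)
The plan is to iterate Theorem \ref{th:stacked} dimension by dimension. First I would dispose of the degenerate case $n = d+1$: here $X^{d-1}$ attains the minimum possible number of vertices for a $(d-1)$-sphere, namely $d+1 = (d-1)+2$, so it must be $\partial \Delta^d = S^{n-2}_n$, and the claimed chain collapses to the single term $X^{d-1}$.

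Now assume $n \geq d+2$. One application of Theorem \ref{th:stacked} produces an $n$-vertex stacked $d$-sphere $X^d \supset X^{d-1}$. Shifting the dimension in that theorem (reading the hypothesis with $k$ in place of $d-1$) shows that an $n$-vertex stacked $k$-sphere is a subcomplex of an $n$-vertex stacked $(k+1)$-sphere whenever $n \geq (k+1)+2 = k+3$, i.e.\ whenever $k \leq n-3$. Iterating for $k = d-1, d, \dots, n-3$, I successively obtain a chain
$$X^{d-1} \subset X^d \subset X^{d+1} \subset \cdots \subset X^{n-2}$$
of $n$-vertex stacked spheres of the required dimensions. Both of the properties needed at each step, namely that the vertex set is preserved and that the extension is stacked, are built into the conclusion of Theorem \ref{th:stacked}, so they propagate without further work.

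It remains to identify $X^{n-2}$. It is an $n$-vertex combinatorial $(n-2)$-sphere, and any $(n-2)$-sphere has at least $(n-2)+2 = n$ vertices; moreover, on this minimum number of vertices the only combinatorial $(n-2)$-sphere is $\partial \Delta^{n-1} = S^{n-2}_n$ (a standard rigidity fact, seen for instance by verifying that the link of every vertex must itself be the boundary of an $(n-2)$-simplex). Hence $X^{n-2} = S^{n-2}_n$, as required.

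There is no real obstacle here: the corollary is essentially a mechanical iteration of Theorem \ref{th:stacked}, with the only non-iterative ingredient being the uniqueness of the minimum-vertex $(n-2)$-sphere at the top of the chain.
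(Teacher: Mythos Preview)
Your argument is correct and is precisely the intended one: the paper does not write out a separate proof of this corollary but simply records it ``as a consequence of Theorem~\ref{th:stacked}'', and your iteration of that theorem together with the observation that the top sphere $X^{n-2}$ must be the standard $S^{\,n-2}_n$ is exactly how that consequence is meant to be read.
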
 

To prove Theorem \ref{th:stacked}, we prove 

\begin{lemma} \label{lem:stacked} 
Let $B$ be an $n$-vertex  stacked $d$-ball. If $d\geq 2$ and $n\geq d+2$ then $B$ is a subcomplex of an $n$-vertex stacked $d$-sphere $S$. 
\end{lemma}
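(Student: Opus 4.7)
The plan is to induct on $n$, exploiting the recursive characterization of stacked $d$-spheres as simplicial $d$-spheres obtainable from $\partial \Delta^{d+1}$ by a sequence of stellar subdivisions of facets. In particular, the stellar subdivision of a facet of a stacked $d$-sphere at a fresh vertex yields a stacked $d$-sphere with one additional vertex (on the ball side, this is attaching a new $(d+1)$-simplex to the boundary of the associated stacked $(d+1)$-ball). For the base case $n = d+2$, any stacked $d$-ball is the union of two $d$-simplices sharing a common $(d-1)$-face, and both are facets of $\partial \Delta^{d+1}$ on the same $d+2$ vertices, which is the desired stacked $d$-sphere.

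For the inductive step, assume the statement for stacked $d$-balls on $n-1$ vertices, and let $B$ have $n \geq d+3$ vertices. Fix a stacking sequence $\sigma_0, \sigma_1, \ldots, \sigma_{n-d-1}$ for $B$; the final simplex $\sigma := \sigma_{n-d-1} = \{v, u_1, \ldots, u_d\}$ introduces a vertex $v$ that lies in no other facet of $B$. Then $B_0 := \sigma_0 \cup \cdots \cup \sigma_{n-d-2}$ is a stacked $d$-ball on $n-1$ vertices in which $\alpha := \{u_1, \ldots, u_d\}$ is a boundary $(d-1)$-face. By the inductive hypothesis, $B_0 \subset S_0$ for some $(n-1)$-vertex stacked $d$-sphere $S_0$; since $|V(B_0)| = n-1 = |V(S_0)|$, the vertex sets coincide, so in particular $v \notin V(S_0)$.

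In the sphere $S_0$, the face $\alpha$ lies in exactly two facets, precisely one of which (call it $\tau$) is the unique facet of $B_0$ containing $\alpha$; let $\tau'$ denote the other one, so $\tau' \notin B_0$. I now perform the stellar subdivision of $S_0$ at $\tau'$ using the fresh vertex $v$: this removes $\tau'$ and inserts the $d+1$ facets of $v \ast \partial \tau'$, producing an $n$-vertex stacked $d$-sphere $S$ on vertex set $V(S_0) \cup \{v\} = V(B)$. Since $\tau' \notin B_0$, every facet of $B_0$ survives the subdivision, and $v \ast \alpha = \sigma$ appears among the newly introduced facets. Hence $B = B_0 \cup \sigma \subset S$, completing the induction.

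The only real subtlety is the extraction of the removable vertex $v$, which requires invoking a stacking sequence to pin down a vertex that lies in a single facet and whose deletion yields a smaller stacked ball; once this is in place, the uniqueness of the facet $\tau'$ opposite to $\tau$ across $\alpha$ in the sphere $S_0$, together with the closure of stackedness under facet stellar subdivision, makes the construction of $S$ essentially forced.
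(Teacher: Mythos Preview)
Your proof is correct, but it follows a genuinely different route from the paper's. The paper gives a one-shot, non-inductive construction: with $C := B_{m-1}$ the stacked $d$-ball obtained by deleting the last facet $\sigma_m = \tau_m \cup \{u\}$, it forms the cone $B' := \{u\} \ast C$, observes that $B'$ is a stacked $(d+1)$-ball (its dual graph is isomorphic to $\Lambda(C)$), and takes $S := \partial B' = C \cup (\{u\} \ast \partial C)$. Then $C \subset S$ trivially, and $\sigma_m = \{u\} \cup \tau_m \in \{u\} \ast \partial C$ since $\tau_m$ is a boundary ridge of $C$; hence $B = C \cup \overline{\sigma}_m \subset S$.

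By contrast, you induct on $n$, embed the truncated ball $B_0$ into a stacked sphere $S_0$ of one fewer vertex, locate the unique facet $\tau'$ of $S_0$ opposite $\tau$ across $\alpha$, and stellarly subdivide $\tau'$ at $v$. The two constructions even produce different spheres in general (e.g.\ for the $5$-vertex stacked $2$-ball with facets $123, 234, 345$, the paper yields $\{123,234,125,245,345,135\}$ while your procedure yields $\{123,234,124,135,145,345\}$). The paper's argument is shorter and gives an explicit closed-form description of $S$; your approach is more incremental and leans on the well-known closure of stacked spheres under facet stellar subdivision, which is standard but not stated in the paper's preliminaries, so you might want to justify it in one line (attaching $\{v\} \ast \tau'$ to the underlying stacked $(d+1)$-ball along the boundary face $\tau'$ keeps it stacked, and its boundary is your $S$).
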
 

This motivate us to  consider the following (stronger) question:  

\begin{question}  \label{qn:ball}
Given a combinatorial $d$-ball  $B$ with $n \geq d+2$ vertices, does there always exist an $n$-vertex combinatorial $d$-sphere $S$  with $B \subset S$?
\end{question}

Thus, Question \ref{qn:ball} has affirmative answer if $B$ is a stacked ball. 
We show that Question \ref{qn:ball} has  affirmative answer in some more cases. We prove

\begin{theorem}  \label{th:ball}
Let $B$ be an $n$-vertex combinatorial $d$-ball. If $n\geq d+2$ then $B$ is a subcomplex of an $n$-vertex combinatorial $d$-sphere in each of the following cases. 
\begin{enumerate}[{$(a)$}]
\item  $B$ has a vertex of degree $d$. 
\item $d=2$. 
\end{enumerate}
\end{theorem}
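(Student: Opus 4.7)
I would prove parts (a) and (b) independently. The strategy in each case is to exhibit a second combinatorial $d$-ball $B''$ with $\partial B''=\partial B$ whose vertex set is contained in $V(B)$, and then set $S:=B\cup B''$.

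For (a), suppose $v\in B$ has degree $d$. Since a combinatorial $(d-1)$-sphere requires at least $d+1$ vertices, the link $\lk{B}{v}$ (having exactly $d$ vertices) cannot be a sphere, so $v$ lies on $\partial B$ and $\lk{B}{v}=\sigma$ is a $(d-1)$-simplex; hence $\st{B}{v}=v\ast\sigma$ is a single $d$-simplex. The crucial auxiliary claim is that $\sigma$ is not a face of $\partial B$. Indeed, if $\sigma\in\partial B$, an analogous link computation shows that every $(d-1)$-face of $v\ast\sigma$ lies on $\partial B$, whence $\partial(v\ast\sigma)$ is a combinatorial $(d-1)$-sphere contained as a subcomplex in the $(d-1)$-sphere $\partial B$ and therefore equal to it; but then $v\ast\sigma$ is an isolated vertex in the dual graph of $B$, so by connectedness $B=v\ast\sigma$ and $n=d+1$, contradicting $n\geq d+2$. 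Granted $\sigma\notin\partial B$, I take $B'':=v\ast{\rm ast}_{\partial B}(v)$, the cone from $v$ over the antistar of $v$ in $\partial B$. Since ${\rm ast}_{\partial B}(v)$ is a combinatorial $(d-1)$-ball, $B''$ is a combinatorial $d$-ball with $\partial B''=\partial B$ and $V(B'')\subseteq V(\partial B)\subseteq V(B)$. To check $B\cap B''=\partial B$, the only nontrivial case is a simplex $v\ast\tau$ with $\tau\in{\rm ast}_{\partial B}(v)$ possibly lying in $B$; this would force $\tau\subseteq\sigma$, and since $\sigma\notin\partial B$ any such $\tau$ is a proper face of $\sigma$, so $v\ast\tau\in\st{\partial B}{v}\subseteq\partial B$. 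Hence $S=B\cup B''$ is the required $n$-vertex combinatorial $d$-sphere containing $B$.

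For (b), I would induct on the length $k$ of the boundary cycle $\partial B=u_1u_2\cdots u_k$. For the base case $k=3$, the same ``isolated-dual-vertex'' argument shows that the triangle $u_1u_2u_3$ cannot be a face of $B$ (else $B$ reduces to that single triangle and $n=3$, contrary to $n\geq 4$), so $S:=B\cup\{u_1u_2u_3\}$ is a 2-sphere. For the step $k\geq 4$, consider the $k$ ``short chords'' $c_i:=u_{i-1}u_{i+1}$. Any two consecutive chords $c_i,c_{i+1}$ have interleaving endpoints on the boundary cycle, so they cross in any planar realisation; since the interior edges of the planar triangulation $B$ form a non-crossing family, no two consecutive $c_i$ can both be edges of $B$. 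In particular some $c_i=u_{i-1}u_{i+1}$ is not an edge of $B$. Attaching the external ear $B':=B\cup\{u_{i-1}u_{i+1},\,u_{i-1}u_iu_{i+1}\}$ yields a combinatorial 2-ball on the same $n$ vertices with boundary of length $k-1$, and the inductive hypothesis supplies a 2-sphere containing $B'$, hence $B$.

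The main difficulty in (a) is the auxiliary claim $\sigma\notin\partial B$; the hypothesis $n\geq d+2$ is used precisely to forbid the degenerate configuration $B=v\ast\sigma$. In (b) the corresponding subtlety is guaranteeing, at every induction step, that a usable external ear is available, and the crossing-pair observation handles this uniformly.
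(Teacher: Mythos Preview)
Your proposal is correct and follows essentially the same route as the paper's proof: in part~(a) both cone the antistar of the degree-$d$ vertex in $\partial B$ over that vertex after first showing (via the dual graph and the hypothesis $n\geq d+2$) that the link simplex is an interior ridge, and in part~(b) both iteratively attach an external triangle along a non-edge short chord to reduce the boundary length to~$3$ and then cap off. The only cosmetic differences are that you deduce $\sigma\notin\partial B$ via the equality $\partial(v\ast\sigma)=\partial B$ rather than directly from the existence of a dual-graph neighbour, and you phrase the ``some short chord is a non-edge'' step as a planar crossing argument rather than the paper's disconnection argument.
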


We also present a short proof of Theorem \ref{th:polytopal}. 
To prove Theorem \ref{th:flag} $(b)$ (respectively,  $(c)$)  we  construct  an $(n-1)$-vertex combinatorial $(d-1)$-sphere and for Theorem \ref{th:polytopal}, we construct  an $(n-1)$-vertex polytopal $(d-1)$-sphere.
In each case, the new $(n-1)$-vertex sphere contains  the anti-star of a vertex  in the original $n$-vertex sphere and an one-point suspension of this new sphere is a desired sphere. 

In Section \ref{example}, we present some examples to show that Question \ref{qn:sphere} has affirmative answer  for some non-polytopal spheres also.



\section{Preliminaries} 

We identify a simplicial complex with the corresponding abstract simplicial complex. Thus, a simplex (or face) in a simplicial complex $X$ is a finite subset of the vertex-set $V(X)$ of $X$.  We also denote a simplex $\{v_1, \dots, v_m\}$ by $v_1\cdots v_m$. We identify two simplicial complexes if they are isomorphic.  If $d$ is the dimension of a simplicial complex $X$, then  the number of $j$-faces of $X$ is denoted by $f_j(X)$ for $0\leq j\leq d$. The number of edges in $X$ through a vertex $v$ is called the {\em degree} of $v$ in $X$ and is denoted by $\deg_X(v)$. 

A simplicial complex $X$ is called {\em pure} if all its maximal faces are of same dimension. The faces of dimensions $d$ and $d-1$ in a pure $d$-dimensional simplicial complex are called {\em facets} and {\em ridges} respectively. The dual graph $\Lambda(X)$ of a pure simplicial complex $X$ is the graph whose vertices are the facets of $X$, where two facets are adjacent in $\Lambda(X)$ if they intersect in a ridge. A $d$-dimensional pure simplicial complex $X$ is called a {\em $d$-pseudomanifold}  if each ridge is in  at most  two facets and the dual graph $\Lambda(X)$ is connected. A ridge which is in one facet (resp., two facets) is called a {\em boundary} (resp., {\em interior}) ridge. The {\em boundary} $\partial X$ is the pure $(d-1)$-dimensional simplicial complex whose facets are the boundary ridges of $X$. If $\partial X$ is empty then $X$ is called a {\em pseudomanifold without boundary}. Since the degree of any vertex in the dual graph $\Lambda(X)$ of $d$-pseudomanifold (resp., $d$-pseudomanifold without boundary) $X$ is at most (resp., exactly) $d+1$ and $\Lambda(X)$ is connected, it follows that a $d$-pseudomanifold can not contains properly any $d$-pseudomanifold without boundary. 

A polyhedron  is called a {\em PL $d$-ball} if it is  PL homeomorphic to a $d$-simplex. A polyhedron is called a {\em PL $(d-1)$-sphere} if it is  PL homeomorphic to the boundary of a $d$-simplex. Thus the boundary of a PL $d$-ball is a PL $(d-1)$-sphere. From Corollaries 3.13 and 3.16 in \cite{RS1982} we know the following. 

\begin{proposition} \label{prop:pl_ball} \begin{enumerate}[{\rm (a)}]
\item Suppose $B$ is a PL $d$-ball in a PL $d$-sphere $S$ then the closure $\overline{S\setminus B}$ is a PL $d$-ball. 
\item If $B_1$, $B_2$ are PL $d$-balls and $B_1\cap B_2$ is a PL $(d-1)$-ball then $B_1\cup B_2$ is a PL $d$-ball. 
\end{enumerate}
\end{proposition}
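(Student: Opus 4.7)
The plan is to derive both parts from foundational PL-topology tools established in \cite{RS1982}: existence of bicollar neighborhoods for PL submanifolds, uniqueness of regular neighborhoods up to PL ambient isotopy, and the PL Alexander trick (which extends any PL self-homeomorphism of $\partial \Delta^{d}$ to a PL self-homeomorphism of $\Delta^{d}$ by coning from an interior point).

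For part $(a)$, I would first invoke the bicollar theorem to obtain a PL embedding $c \colon \partial B \times [-1,1] \hookrightarrow S$ with $c(x,0) = x$, $c(\partial B \times [0,1]) \subset B$, and $c(\partial B \times [-1,0]) \subset \overline{S \setminus B}$. Setting $A = \overline{S \setminus B}$, it remains to exhibit a PL homeomorphism from $A$ to a $d$-simplex. The cleanest route is via uniqueness of regular neighborhoods: $B$ is a regular neighborhood of any interior point $p$, and so is a small simplicial ball $B^{\prime}$ around $p$ in a fixed triangulation of $S$. By the regular-neighborhood uniqueness theorem these are PL ambient isotopic in $S$, so the isotopy carries $A$ onto $\overline{S \setminus B^{\prime}}$, which is a PL $d$-ball by the model decomposition $\partial \Delta^{d+1} = \mathrm{st}(v) \cup \overline{\partial \Delta^{d+1} \setminus \mathrm{st}(v)}$, in which both pieces are evidently PL $d$-balls.

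For part $(b)$, set $D = B_1 \cap B_2$, a PL $(d-1)$-ball, which must lie in $\partial B_1 \cap \partial B_2$ for $B_1 \cup B_2$ to be a PL $d$-manifold. Applying part $(a)$ inside each PL $(d-1)$-sphere $\partial B_i$, one concludes that $E_i := \overline{\partial B_i \setminus D}$ is a PL $(d-1)$-ball. Hence $\partial(B_1 \cup B_2) = E_1 \cup E_2$ is a union of two PL $(d-1)$-balls glued along their common boundary $(d-2)$-sphere $\partial D$, and is therefore a PL $(d-1)$-sphere. To promote this to the conclusion that $B_1 \cup B_2$ itself is a PL $d$-ball, I would fix a PL homeomorphism $\varphi \colon B_1 \to \Delta^{d}$ sending $D$ onto the union of all but one facet of $\partial \Delta^{d}$, and then extend $\varphi$ across $B_2$ using the PL Alexander trick applied to the remaining boundary gluing (identifying $B_2$ with the cone on $\partial B_2$ and matching cone rays to rays from an interior point of $\Delta^d$).

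The main obstacle in both parts is the appeal to PL uniqueness --- complements of PL balls in PL spheres, and unions of PL balls along PL ball intersections, produce no ``exotic'' PL $d$-manifolds beyond the standard ball --- which rests on the regular-neighborhood and collar-uniqueness machinery of Rourke--Sanderson; for the purposes of this paper one simply quotes Corollaries 3.13 and 3.16 of \cite{RS1982}, as the author does.
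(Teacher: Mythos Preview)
The paper does not actually prove this proposition: it is quoted verbatim as Corollaries~3.13 and~3.16 of Rourke--Sanderson \cite{RS1982}, with no argument given. You recognise this in your final sentence, and the sketch you supply (regular-neighbourhood uniqueness for~(a), boundary decomposition plus an Alexander-trick extension for~(b)) is indeed the substance of those corollaries, so your proposal is consistent with --- and strictly more detailed than --- what the paper does.

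One small wrinkle in your sketch of~(b): the containment $D \subset \partial B_1 \cap \partial B_2$ is a \emph{hypothesis} in the Rourke--Sanderson statement (and holds in every application the paper makes of it), not something you can deduce from the desired conclusion; your phrasing ``which must lie in $\partial B_1 \cap \partial B_2$ for $B_1 \cup B_2$ to be a PL $d$-manifold'' is circular as written. Also, the extension step ``sending $D$ onto the union of all but one facet of $\partial\Delta^d$'' and then coning across $B_2$ is a bit compressed --- one really wants a PL homeomorphism of pairs $(B_1,D)\to(\Delta^d,\text{facet})$, then a second one $(B_2,D)\to(\Delta^d,\text{facet})$, and finally the Alexander trick to adjust the second so that the two agree on $D$. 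But these are matters of exposition; the underlying argument is correct, and for the paper's purposes the bare citation is all that is offered or needed.
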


For a simplicial complex $X$, $|X|$ denotes the geometric carrier of $X$ with induced PL structure.  A simplicial complex $X$ is said to be a {\em combinatorial $d$-ball} (resp., {\em  combinatorial $d$-sphere}) if $|X|$  is a PL $d$-ball (resp., PL  $d$-sphere). Clearly, a combinatorial $d$-ball is a $d$-pseudomanifold with boundary and a combinatorial $d$-sphere is a $d$-pseudomanifold without boundary. Since the boundary of a PL $d$-ball is a PL $(d-1)$-sphere, the boundary  of a combinatorial $d$-ball is a combinatorial $(d-1)$-sphere. Clearly, the number of vertices in a combinatorial $d$-ball (resp., $(d-1)$-sphere) is at least $d+1$. A combinatorial $d$-ball with $d+1$ vertices and (hence) with one facet is called a  {\em standard $d$-ball}. The standard $d$-ball with facet $\sigma$ is denoted by $\overline{\sigma}$. The boundary of a standard $(d+1)$-ball is called a  {\em standard $d$-sphere}. Standard $d$-sphere on the vertex-set $V$ is denoted by $S^d_{d+2}(V)$ (or $S^d_{d+2}$). A combinatorial 2-ball is also called a {\em combinatorial disc}. 

If $Y$ is a proper pure $d$-dimensional subcomplex of a pure $d$-dimensional simplicial complex $X$ then the pure subcomplex   of $X$ whose facets are those facets of $X$ which are not in $Y$ is denoted by $X-Y$. Clearly, $|X-Y|$ is equal to the closure of $|X|\setminus|Y|$. 

For two simplicial complexes $X$ and $Y$ with disjoint vertex-sets, the simplicial complex $X\ast Y := X\cup Y \cup \{\alpha\cup\beta \, : \, \alpha\in X, \beta\in Y\}$ is called the {\em join} of $X$ and $Y$. If $X$ is a combinatorial $c$-sphere and $Y$ is a combinatorial $d$-sphere then $X\ast Y$ is a combinatorial $(c+d+1)$-sphere.  If $X$ is a combinatorial $c$-ball and $Y$ is a combinatorial $d$-sphere or a combinatorial $d$-ball then $X\ast Y$ is a combinatorial  $(c+d+1)$-ball (cf. \cite[Proposition 2.23]{RS1982}).

The {\em star} ${\rm st}_X(x)$,  the {\em link} ${\rm lk}_X(x)$ and the {\em anti-star} ${\rm ast}_X(x)$ of a vertex $x$ in a simplicial complex $X$ are defined as follows  

\begin{align*}
{\rm st}_X(x) & := \{\tau\in X : \{v\}\cup\tau\in X\}, \hspace{3mm}
 {\rm lk}_X(x) := \{\tau\in {\rm st}_X(x) : v\not\in \tau\}, \\
{\rm ast}_X(x) &:= \{\tau\in X: x\not\in\tau\}. 
\end{align*} 

Clearly, ${\rm st}_X(x)=\{x\} \ast  {\rm lk}_X(x)$. Since the geometric carrier of  a combinatorial $d$-ball $B$  is a PL $d$-ball, it follows that  ${\rm lk}_B(x)$ is either a combinatorial $(d-1)$-ball or a combinatorial $(d-1)$-sphere for any $x\in V(B)$. Similarly, for a vertex $x$ in a combinatorial $d$-sphere $S$, ${\rm lk}_S(x)$ is  a combinatorial $(d-1)$-sphere.  
As a consequence of Proposition \ref{prop:pl_ball}, we get 

\begin{lemma} \label{lem:ball} 
\begin{enumerate}[{\rm (a)}]
\item If a combinatorial $d$-ball $B$ is a subcomplex of a combinatorial $d$-sphere $S$, then the subcomplex $S- B$ is also a combinatorial $d$-ball.  
\item If $B_1$, $B_2$ are combinatorial $d$-balls and $B_1\cap B_2$ is a combinatorial $(d-1)$-ball, then $B_1\cup B_2$ is a combinatorial $d$-ball. 
\item If $B_1$, $B_2$ are combinatorial $d$-balls and $B_1\cap B_2=\partial B_1=\partial B_2$,  then $B_1\cup B_2$ is a combinatorial $d$-sphere. 
\item  If  $u$ is a vertex of a combinatorial $d$-sphere $S$, then ${\rm ast}_S(u)$ is  a combinatorial $d$-ball.  
\item  If  $u$ is a vertex of a combinatorial $d$-sphere $S$, then $\{u\}\ast{\rm ast}_S(u)$ is  a combinatorial $(d+1)$-ball and  $\partial(\{u\}\ast{\rm ast}_S(u))=S$. 

\end{enumerate} 
\end{lemma}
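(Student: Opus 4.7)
\textbf{Proof plan for Lemma~\ref{lem:ball}.} Each of the five parts is the combinatorial shadow of a PL topology statement. The uniform strategy is to pass to geometric carriers, invoke Proposition~\ref{prop:pl_ball}, and translate back via the identity $|X-Y|=\overline{|X|\setminus|Y|}$ noted in the preliminaries, using the join results for combinatorial balls and spheres where appropriate.

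Parts (a) and (b) are essentially immediate translations. For (a), $|B|$ is a PL $d$-ball in the PL $d$-sphere $|S|$, so Proposition~\ref{prop:pl_ball}(a) tells us $\overline{|S|\setminus|B|}=|S-B|$ is a PL $d$-ball. For (b), I would apply Proposition~\ref{prop:pl_ball}(b) to $|B_1|,|B_2|$, using the standard compatibility of $|\cdot|$ with unions and intersections.

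For (c), I would first verify that $B_1\cup B_2$ is a $d$-pseudomanifold without boundary via a short case analysis on a $(d-1)$-face $\rho$: if $\rho\notin\partial B_1$, then $\rho\notin B_2$ (else $\rho\in B_1\cap B_2=\partial B_1$), so $\rho$ is interior to $B_1$ and lies in exactly two facets of $B_1\cup B_2$; if $\rho\in\partial B_1=\partial B_2$, then $\rho$ lies in one facet of each $B_i$, giving two total. The remaining step---that $|B_1|\cup|B_2|$ is actually a PL $d$-sphere---is the standard PL fact that the double of a PL $d$-ball along its boundary is a PL $d$-sphere, which I would cite from \cite{RS1982}. This is the one place where the argument requires a PL input beyond Proposition~\ref{prop:pl_ball}, and is the main obstacle to a fully self-contained proof from the stated preliminaries.

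Parts (d) and (e) are then purely combinatorial consequences of (a) together with the join results. For (d), $\st{S}{u}=\{u\}\ast\lk{S}{u}$ is the join of a vertex with a combinatorial $(d-1)$-sphere, hence a combinatorial $d$-ball. Because $S$ is a pseudomanifold without boundary, every $(d-1)$-face of $S$ not containing $u$ lies in some $d$-facet of $S$ not containing $u$ (of the two facets of $S$ through such a ridge, both cannot contain $u$), so ${\rm ast}_S(u)$ is pure of dimension $d$ and coincides with $S-\st{S}{u}$; part (a) then yields that ${\rm ast}_S(u)$ is a combinatorial $d$-ball. For (e), $\{u\}\ast{\rm ast}_S(u)$ is the join of a $0$-ball with a $d$-ball, hence a combinatorial $(d+1)$-ball by the join proposition. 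To identify its boundary, I would enumerate its boundary $d$-faces: a facet $\tau$ of ${\rm ast}_S(u)$ sits in the unique facet $\{u\}\cup\tau$ of the join, so it is a boundary ridge; a $d$-face $\{u\}\cup\rho$ (with $\rho$ a $(d-1)$-face of ${\rm ast}_S(u)$) is a boundary ridge iff $\rho$ lies in a single facet of ${\rm ast}_S(u)$, i.e.\ iff $\rho$ is a boundary ridge of ${\rm ast}_S(u)$, and by the same pseudomanifold argument these $\rho$ are exactly the facets of $\lk{S}{u}$. Assembling the two families gives the facets of ${\rm ast}_S(u)\cup\st{S}{u}=S$, so $\partial(\{u\}\ast{\rm ast}_S(u))=S$ as claimed.
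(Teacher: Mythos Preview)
Your treatment of parts (a), (b), (d), (e) matches the paper's proof essentially verbatim: (a) and (b) are direct translations of Proposition~\ref{prop:pl_ball}, (d) uses $S-{\rm st}_S(u)={\rm ast}_S(u)$ together with (a), and (e) uses the join proposition plus the boundary formula $\partial(\{u\}\ast {\rm ast}_S(u)) = {\rm ast}_S(u) \cup (\{u\}\ast \partial\,{\rm ast}_S(u))$, which is exactly what your ridge enumeration amounts to.

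The genuine difference is in part (c). You invoke the external PL fact that two PL $d$-balls glued along their common boundary sphere yield a PL $d$-sphere, and you acknowledge this goes beyond Proposition~\ref{prop:pl_ball}. The paper avoids this extra citation with a coning trick: pick a new vertex $v$, form the combinatorial $(d+1)$-balls $\{v\}\ast B_1$ and $\{v\}\ast B_2$, observe that their intersection $\{v\}\ast(B_1\cap B_2)=\{v\}\ast\partial B_1$ is a combinatorial $d$-ball (cone on a $(d-1)$-sphere), apply part~(b) to get that $\{v\}\ast(B_1\cup B_2)$ is a combinatorial $(d+1)$-ball, and then read off $B_1\cup B_2$ as its boundary. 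Your route is more direct and conceptually transparent (it names the underlying PL theorem), but the paper's route is fully self-contained within the stated preliminaries, which is what it buys. Your preliminary pseudomanifold check in (c) is correct but, as you implicitly note, redundant once the PL sphere statement is in hand.
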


\begin{proof} 
Part $(a)$ follows by Proposition \ref{prop:pl_ball} $(a)$. Part $(b)$ follows by Proposition \ref{prop:pl_ball} $(b)$. 

Take a new vertex $v$ not in $B_1$ or $B_2$. Since $\partial B_1$ is a combinatorial $(d-1)$-sphere, $\{v\}\ast \partial B_1$ is a combinatorial $d$-ball. Now, $\{v\}\ast B_1$, $\{v\}\ast B_2$ are combinatorial $(d+1)$-balls and  $(\{v\}\ast B_1) \cap (\{v\}\ast B_2) = \{v\}\ast (B_1\cap B_2)= \{v\}\ast \partial B_1$. Therefore, by part $(a)$, $(\{v\}\ast B_1) \cup (\{v\}\ast B_2)$ is a combinatorial $(d+1)$-ball. Observe that $\partial((\{v\}\ast B_1) \cup (\{v\}\ast B_2)) = B_1\cup B_2$. Thus, $B_1\cup B_2$ is a combinatorial $d$-sphere. This proves part $(c)$. 

Observe that ${\rm st}_S(u)\cup {\rm ast}_S(u) = S$ and ${\rm st}_S(u)\cap {\rm ast}_S(u) = {\rm lk}_S(u)$. 
Therefore, $S-{\rm st}_S(u) ={\rm ast}_S(u)$.  Since ${\rm st}_S(u)$ is a combinatorial $d$-ball, part $(d)$ now follows by part $(a)$. 

Since ${\rm ast}_S(u)$ is a combinatorial $d$-ball, $\{u\}\ast{\rm ast}_S(u)$ is a combinatorial $(d+1)$-ball. 
Now, $\partial(\{u\}\ast {\rm ast}_S(u)) = {\rm ast}_S(u) \cup (\{u\}\ast \partial ({\rm ast}_S(u))) = {\rm ast}_S(u) \cup (\{u\}\ast {\rm lk}_S(u)) = {\rm ast}_S(u) \cup {\rm st}_S(u) =S$.  This proves part $(e)$. 
\end{proof}

A {\em clique}  in a simplicial complex $X$  is a subset $U$ of $V(X)$ such that every pair of distinct vertices of $U$ forms an edge of $X$. A combinatorial $d$-sphere $S$ is called {\em flag} if $S\neq S^d_{d+2}$ and every clique in $S$ is a simplex of $S$ (equivalently,  all minimal non-faces are of size two). 

Since a $d$-polytope is a PL $d$-ball, the boundary of a $d$-polytope is a PL $(d-1)$-sphere. This implies that the boundary complex of a simplicial $d$-polytope is a combinatorial $(d-1)$-sphere.  A combinatorial $(d-1)$-sphere is called a {\em polytopal sphere} if it is (isomorphic to) the boundary complex of a simplicial $d$-polytope. Clearly, combinatorial 1-spheres (cycles) are polytopal. Since a combinatorial 2-sphere is uniquely determined by its 1-skeleton, it follows from Steinitz's theorem that every combinatorial 2-sphere is polytopal (cf. \cite{Gr2003}). For $d\geq 3$, there are combinatorial $d$-spheres which are not polytopal (cf. \cite{BD1998}). Recall that two simplicial polytopes are {\em combinatorially equivalent} if their boundary complexes are isomorphic. For any simplicial $d$-polytope $P = {\rm conv}(P(V))\subset \mathbb{R}^d$, we can perturb (the coordinates of) the vertices ``a little" without changing the combinatorial type (cf. \cite{Zi1995}). This gives 

\begin{lemma} \label{lem:simplicial} 
Let $P = {\rm conv}(P(V))\subset \mathbb{R}^d$  be a simplicial $d$-polytope. Then $P$ is combinatorially equivalent to a $d$-polytope $Q$, where no $d+1$ vertices of $Q$ are in a hyperplane. (Therefore, for $m\geq d+1$, every set of $m$ vertices of $Q$ span a simplicial $d$-polytope.) 
\end{lemma}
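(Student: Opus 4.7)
The plan is to treat this as a standard perturbation/general-position argument in the space of vertex coordinates. Enumerate $V=\{v_1,\dots,v_n\}$ and regard an assignment of coordinates to $V$ as a point in $\mathbb{R}^{dn}$; the given polytope $P$ corresponds to some point $p_0 \in \mathbb{R}^{dn}$. I would prove the lemma by showing two subsets of $\mathbb{R}^{dn}$ meet near $p_0$: the set $\mathcal{C}$ of coordinate assignments producing a polytope combinatorially equivalent to $P$, and the set $\mathcal{G}$ of coordinate assignments in which no $d+1$ of the $n$ points lie on a common hyperplane.

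For $\mathcal{C}$, the key fact (which is the standard ``stability of the combinatorial type under perturbation'' for simplicial polytopes; see e.g.\ Ziegler \cite{Zi1995}) is that $\mathcal{C}$ is open in $\mathbb{R}^{dn}$ and contains $p_0$. The reason is that each facet of $P$ is a $d$-simplex determined by $d$ affinely independent vertices, and the two conditions that determine a facet, namely that these $d$ vertices be affinely independent and that the remaining vertices all lie strictly on one side of the supporting hyperplane, are both open conditions on the coordinates. Since $P$ has only finitely many facets, a small enough neighborhood of $p_0$ lies in $\mathcal{C}$.

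For $\mathcal{G}$, the condition that a fixed $(d+1)$-subset of vertices be coplanar is the vanishing of a single determinant polynomial in the coordinates, hence cuts out a proper algebraic subvariety of $\mathbb{R}^{dn}$. Taking the union over all $\binom{n}{d+1}$ choices of $(d+1)$-subsets gives a proper algebraic subvariety, whose complement $\mathcal{G}$ is (Zariski) open and dense. In particular, $\mathcal{G}$ meets every open neighborhood of $p_0$, so $\mathcal{C}\cap\mathcal{G}\neq\emptyset$; picking any $q\in\mathcal{C}\cap\mathcal{G}$ produces the desired polytope $Q$.

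The parenthetical statement is then almost automatic. Let $U$ be any $m\geq d+1$ vertices of $Q$ and let $Q'=\mathrm{conv}(U)$. Since no $d+1$ vertices of $Q$ are coplanar, any $d+1$ vertices in $U$ are affinely independent, so $Q'$ is full-dimensional. Each facet of $Q'$ is the convex hull of the vertices of $U$ lying on some supporting affine hyperplane, and by the general-position hypothesis at most $d$ vertices of $U$ lie on any such hyperplane; thus every facet of $Q'$ has exactly $d$ vertices and $Q'$ is simplicial. The only genuine step here is the perturbation argument in the first paragraph, and the main (minor) obstacle is simply citing the openness of the combinatorial type correctly; everything else is elementary algebraic geometry applied to determinant polynomials.
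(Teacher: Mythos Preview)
Your argument is correct and rests on the same key input as the paper's (openness of the realization space of a simplicial polytope, cited to \cite{Zi1995}), but the packaging differs. The paper perturbs the vertices one at a time: having already placed $v_1,\dots,v_m$ in general position, it moves the next vertex $u_{m+1}$ off the finitely many hyperplanes ${\rm AH}(\{v_{i_1},\dots,v_{i_d}\})$, $1\le i_1<\cdots<i_d\le m$, while staying within the small neighborhood that preserves the combinatorial type, and proceeds inductively. Your version is global: the bad locus (some $(d{+}1)$-subset coplanar) is a finite union of determinantal hypersurfaces in $\mathbb{R}^{dn}$, hence a proper Zariski-closed set whose complement $\mathcal{G}$ is Euclidean-dense, and you intersect it with the open set $\mathcal{C}$. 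Both are standard perturbation arguments; yours is slicker and avoids the bookkeeping of the induction, while the paper's vertex-by-vertex approach is a touch more elementary in that it only needs ``finitely many hyperplanes do not fill an open ball'' rather than the density of the complement of a variety. Your justification of the parenthetical statement (any facet-supporting hyperplane of ${\rm conv}(U)$ meets $U$ in at most $d$ points) is also fine and in fact spells out something the paper leaves implicit.
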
 

\begin{proof}
Let the vertex-set $V(P)$ of $P$ be $\{v_1, \dots, v_d, u_{d+1}, \dots, u_{n}\}$, where ${\rm conv}(\{v_1, \dots, v_d\})$ is a face.  So, the affine hull ${\rm AH}(\{v_1, \dots, v_d\})$ is a hyperplane. Perturb $u_{d+1}$, if necessary, to a get new vertex $v_{d+1}$,  so that the combinatorial type of $P$ is unchanged and $v_{d+1}$ is not in the  hyperplane ${\rm AH}(\{v_1, \dots, v_d\})$. Assume that, for $d+1\leq i\leq m<n$, we have perturbed $u_{i}$ to  make new vertex $v_{i}$, so that no $d+1$ vertices in the set $\{v_1, \dots, v_m\}$ are in a hyperplane (and hence affine hull of any $d$ vertices in the set $\{v_1, \dots, v_m\}$ is a hyperplane). Now,  perturb $u_{m+1}$ to get a new vertex $v_{m+1}$ so that the combinatorial type of the polytope is unchanged and $v_{m+1}$ is not in any of the hyperplanes ${\rm AH}(\{v_{i_1}, \dots, v_{i_d}\})$, for $1\leq i_1< \cdots <  i_d\leq m$. Then, no $d+1$ vertices in the set $\{v_1, \dots, v_m, v_{m+1}\}$ are in a hyperplane. Inductively, we get new vertices $v_{d+1}, \dots, v_{n}$, so that $Q := {\rm conv}(\{v_1, \dots, v_m\})$ is combinatorially equivalent to $P$ and no $d+1$ vertices of $Q$ are in a hyperplane. This proves the lemma. 
\end{proof}

Let $X$ be a $d$-pseudomanifold without boundary and $u\in V(X)$. Then, for a new symbol $v\not\in V(X)$, the $(d + 1)$-pseudomanifold (without boundary) $\Sigma_{u,v}(X) := (\{u\} \ast {\rm ast}_X(u)) \cup (\{v\} \ast  X) $ is called an {\em one-point suspension} of $X$ (\cite{BD2008}). If $Y \subseteq X$ are pseudomanifolds without boundaries and $u\in V(Y)$ then ${\rm ast}_Y(u) \subseteq {\rm ast}_X(u)$ and hence $\Sigma_{u,v}(Y) \subseteq \Sigma_{u,v}(X)$ for $v\not\in V(X)$. For any pseudomanifold $X$ without boundary,  $|\Sigma_{u,v}(X)|$ is the suspension of $|X|$ (see \cite[Lemma 6]{BD1998}). Therefore, if $X$ is a combinatorial  $d$-sphere then $\Sigma_{u,v}(X)$ is a combinatorial $(d + 1)$-sphere. From \cite[Corollary 2 $(b)$]{BD1998}, we know the following. 

\begin{lemma} \label{lem:1pt} 
One-point suspension $\Sigma_{u,v}(X)$ is a polytopal sphere if and only if $X$ is a polytopal sphere. 
\end{lemma}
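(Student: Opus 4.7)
My plan is to prove the two implications by direct polytopal constructions. The backward direction uses vertex figures: every face of $\Sigma_{u,v}(X)$ containing $v$ lies in $\{v\}\ast X$, so ${\rm lk}_{\Sigma_{u,v}(X)}(v) = X$. If $\Sigma_{u,v}(X)$ is the boundary complex of a simplicial $(d+2)$-polytope $Q$, then intersecting $Q$ with a hyperplane very close to $v$ and separating $v$ from the other vertices of $Q$ yields a simplicial $(d+1)$-polytope whose boundary complex is combinatorially ${\rm lk}_{\Sigma_{u,v}(X)}(v) = X$, so $X$ is polytopal.

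For the forward direction, assume $X = \partial P$ for a simplicial $(d+1)$-polytope $P\subset \mathbb{R}^{d+1}$ with $u\in V(P)$; by Lemma~\ref{lem:simplicial} I may assume $P$ is in general position. Embed each $w\in V(P)\setminus\{u\}$ at $(w,0)\in \mathbb{R}^{d+2}$, but replace $(u,0)$ with the lifted point $p_u := (u,\delta)$ for some small $\delta>0$ and add a further new point $p_v := (u,-M)$ for some large $M>0$. Let
$$Q := {\rm conv}\bigl(\{(w,0) : w\in V(P)\setminus\{u\}\}\cup\{p_u,p_v\}\bigr).$$
I then claim that $\partial Q$ is combinatorially $\Sigma_{u,v}(X)$ via the correspondence $(w,0)\leftrightarrow w$, $p_u\leftrightarrow u$, $p_v\leftrightarrow v$.

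To verify the claim I would classify the facets of $Q$ according to the sign of the $(d+2)$-coordinate of their outward normals. Facets with positive sign must contain $p_u$ (the unique vertex with largest $x_{d+2}$) and, for $\delta>0$ small, their projections to $\mathbb{R}^{d+1}$ form the pulling subdivision of $P$ from $u$; these yield simplices $\{p_u\}\cup\tau$ for $\tau$ a facet of $X$ not through $u$, matching the facets of $\{u\}\ast{\rm ast}_X(u)$. Facets with negative sign are symmetric, giving $\{p_v\}\cup\tau$ for the same $\tau$'s, matching $\{v\}\ast\sigma$ for $\sigma\not\ni u$. The remaining facets contain both $p_u$ and $p_v$, hence the entire vertical edge $[p_u,p_v]$, and for each facet $F$ of $P$ through $u$ the hyperplane ${\rm aff}(F)\times \mathbb{R}$ supports $Q$ and cuts out the simplex $\{p_u,p_v\}\cup (F\setminus\{u\})$, matching $\{v\}\ast\sigma$ for $\sigma=F$. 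The principal technical obstacle is this three-way classification; in particular, the general-position hypothesis on $P$ is used to ensure $[p_u,p_v]$ is indeed an edge of $Q$ and that no extraneous supporting hyperplanes produce additional facets.
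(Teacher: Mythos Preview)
The paper does not give its own proof of this lemma; it is quoted as a known fact from \cite[Corollary~2\,(b)]{BD1998}. Your proposal therefore supplies a proof where the paper offers only a citation, and the approach you outline is correct.

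For the backward implication, your observation that ${\rm lk}_{\Sigma_{u,v}(X)}(v)=X$ together with the standard vertex-figure construction is exactly what is needed.

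For the forward implication your construction works, but you are hedging more than necessary. Neither the general-position hypothesis from Lemma~\ref{lem:simplicial} nor the smallness of $\delta$ or largeness of $M$ is required: for \emph{any} $\delta,M>0$ one already has $\partial Q\cong\Sigma_{u,v}(X)$. Since $p_u$ and $p_v$ both project to the vertex $u$ of $P$, any hyperplane $H\subset\mathbb{R}^{d+1}$ supporting $P$ at $u$ alone lifts to the vertical hyperplane $H\times\mathbb{R}$ supporting $Q$ exactly along $[p_u,p_v]$, so this segment is always an edge. Every upper facet must contain $p_u$ (the unique vertex of positive height); writing it as $\{p_u\}\cup\tau$ with $\tau\subset V(P)\setminus\{u\}$, the condition that all remaining $(w,0)$ lie strictly below the affine span says precisely that $\tau$ spans a facet-defining hyperplane of $P$ not through $u$, and simpliciality of $P$ then forces $\tau$ to be a facet of $X$ with $u\notin\tau$. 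The same argument with $p_v$ gives the lower facets, and the vertical supporting hyperplanes ${\rm aff}(F)\times\mathbb{R}$ for facets $F$ of $X$ through $u$ give the remaining facets $\{p_u,p_v\}\cup(F\setminus\{u\})$, as you say. So the ``principal technical obstacle'' you flag is in fact straightforward, and invoking Lemma~\ref{lem:simplicial} is unnecessary.
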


A simplicial complex $B$ is called a {\em stacked $d$-ball} if there exists a sequence $B_1, \dots, B_m$ of pure simplicial complexes such that $B_1$ is a standard $d$-ball, $B_m=B$ and, for $2\leq i\leq m$, $B_i = B_{i-1} \cup \overline{\sigma}_i$ and $B_{i-1} \cap \overline{\sigma}_i =\overline{\tau}_i$, where $\sigma_i$ is a facet of $B_i$ and $\tau_i$ is a ridge of $B_i$. (By definition, each $B_i$ is a stacked $d$-ball.) A simplicial complex is called a {\em stacked $d$-sphere} if it is (isomorphic to) the boundary of a stacked $(d + 1)$-ball. A trivial induction on $m$ shows, by Lemma \ref{lem:ball} $(b)$,  that a stacked ball is a combinatorial ball and hence a stacked sphere is  a combinatorial sphere.  Also inductively, by taking the new vertex in $\sigma_i$ which is not in $\tau_i$ outside $|B_{i-1}|$ and very close to $|\tau_i|$, we can see that $|B|$ is a polytope. Thus, a stacked sphere is a polytopal sphere.  If $B$ is a stacked ball then clearly $\Lambda(X)$ is a tree. From \cite[Lemma 2.1]{DS2013} we know

\begin{lemma} \label{lem:st_ball} 
Let $X$ be a pure simplicial complex of dimension $d$. \begin{enumerate}[{$(a)$}] 
\item If the dual graph $\Lambda(X)$ is a tree then $f_0(X) \leq f_d(X) + d$.
\item The graph $\Lambda(X)$ is a tree and $f_0(X) = f_d(X) + d$ if and only if $X$ is a stacked $d$-ball. 
\end{enumerate}
\end{lemma}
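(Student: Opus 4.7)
The plan is to prove (a) and (b) simultaneously by induction on $f_d(X)$, using the fact that every finite tree with at least two vertices has a leaf. The base case $f_d(X) = 1$ is immediate: $X$ is a single $d$-simplex, so $f_0(X) = d+1 = f_d(X) + d$, $\Lambda(X)$ is a single vertex (trivially a tree), and $X$ is the standard---hence stacked---$d$-ball.

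For the inductive step of (a), I would pick a leaf $\sigma$ of $\Lambda(X)$, so that $\sigma$ meets exactly one other facet $\sigma'$ in a ridge $\tau := \sigma \cap \sigma'$, and let $v$ denote the unique vertex of $\sigma \setminus \tau$. Set $X' := X - \overline{\sigma}$. Then $\Lambda(X')$ is $\Lambda(X)$ with the leaf $\sigma$ removed, hence again a tree, and $f_d(X') = f_d(X) - 1$. The only vertex of $X$ that could possibly disappear in passing to $X'$ is $v$ (vertices of $\tau$ remain in $\sigma'$), and it actually disappears if and only if no other facet of $X$ contains $v$. Hence $f_0(X') \in \{f_0(X), f_0(X) - 1\}$, and the inductive hypothesis $f_0(X') \leq f_d(X') + d$ yields $f_0(X) \leq f_d(X) + d$ in either case.

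For the ``if'' direction of (b), a routine induction on the length of the defining sequence $B_1, \dots, B_m$ works: each transition $B_i = B_{i-1} \cup \overline{\sigma}_i$ with $B_{i-1} \cap \overline{\sigma}_i = \overline{\tau}_i$ introduces one new facet, one new vertex (the vertex of $\sigma_i$ not in $\tau_i$, necessarily new since $B_{i-1} \cap \overline{\sigma}_i = \overline{\tau}_i$), and one new edge in $\Lambda$, preserving both the tree property and the equality $f_0 = f_d + d$ from the base $f_0(B_1) = d+1 = f_d(B_1) + d$. For the converse, suppose $\Lambda(X)$ is a tree and $f_0(X) = f_d(X) + d$. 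The two-case analysis of (a) applied to a leaf $\sigma$ shows that the alternative $f_0(X') = f_0(X)$ would yield $f_0(X') = f_d(X') + d + 1$, contradicting (a) applied to $X'$. Hence $f_0(X') = f_0(X) - 1$, meaning $v$ lies in no facet of $X$ other than $\sigma$. This forces $X' \cap \overline{\sigma} = \overline{\tau}$, and $X'$ inherits the same hypotheses with one fewer facet. By induction $X'$ is a stacked $d$-ball, and appending $\overline{\sigma}$ to its defining sequence exhibits $X$ as a stacked $d$-ball.

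The main subtlety I anticipate is the identification $X' \cap \overline{\sigma} = \overline{\tau}$ in the equality case: it requires both that $v$ belongs to no other facet of $X$ (so that every face of $\sigma$ containing $v$ is excluded from $X'$) and that every face of $\tau$ survives (automatic because $\tau \subset \sigma'$). The combinatorial identity $|\sigma \setminus \tau| = 1$ is the quiet engine making the induction tight, as it is precisely what makes removing a leaf of $\Lambda(X)$ correspond to stripping off one facet with an exact decrement of one vertex.
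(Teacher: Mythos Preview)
The paper does not prove this lemma; it is quoted from \cite{DS2013} without argument. Your induction on $f_d(X)$ via removing a leaf of $\Lambda(X)$ is the standard proof and is correct, including the handling of the equality case in~(b) by forcing the vertex count of $X'$ to drop and then verifying $X'\cap\overline{\sigma}=\overline{\tau}$.

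The one step that deserves an extra sentence is, in the ``if'' direction of~(b), the assertion that each transition $B_{i-1}\to B_i$ adds exactly \emph{one} edge to the dual graph. You correctly note that every ridge of $\sigma_i$ other than $\tau_i$ contains the new vertex and hence is absent from $B_{i-1}$, so all new edges of $\Lambda(B_i)$ issue from $\sigma_i$ via $\tau_i$; but you should also argue that $\tau_i$ lies in a \emph{unique} facet of $B_{i-1}$. This holds because, inductively, $B_{i-1}$ is a combinatorial $d$-ball (the paper records this just before the lemma), hence a pseudomanifold: $\tau_i$ then lies in at most two facets of $B_i$, so in at most one facet of $B_{i-1}$, and in at least one since $B_{i-1}$ is pure and $\tau_i\in B_{i-1}$. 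With that remark added, the argument is complete.
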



\section{Proofs}

\begin{proof}[Proof of Theorem \ref{th:polytopal}] 
Suppose $S=\partial P$, where $P$ is an $n$-vertex simplicial $d$-polytope in $\mathbb{R}^{d}$. 
Because of Lemma \ref{lem:simplicial}, we assume that that no $d+1$ vertices of $P$ are in a hyperplane. 
Let $v\in V(S)$. Since $n-1\geq d+1$, this implies that the convex hull $\widetilde{P} := {\rm conv}(V(P) \setminus\{v\})$ is a simplicial $d$-polytope in $\mathbb{R}^{d}$.  Now, if $\gamma$ is a face of $P$ not containing $v$ (i.e., $\gamma\in{\rm ast}_S(v)$) then $\gamma = P\cap W$ for some hyperplane $W$ and $P$ lies in one half space $W^{+}$ determined by $W$. Then $\widetilde{P}$ is also in $W^{+}$. Also, $\gamma \subseteq W \cap \widetilde{P} \subseteq W \cap P = \gamma$. So, $W \cap \widetilde{P} = \gamma$. Therefore $\gamma$ is a face of $\widetilde{P}$. These imply that ${\rm ast}_S(v)$ is a subcomplex of the polytopal $(d - 1)$-sphere $\widetilde{S} := \partial \widetilde{P}$. Now, $v \not\in V(\widetilde{S})$.  Choose a vertex $u$ of  $\widetilde{S}$. Consider the one-point suspension $S^{\hspace{.1mm}\prime} : = \Sigma_{u, v}\widetilde{S}$ of $\widetilde{S}$.  Since $\widetilde{S}$ is an $(n-1)$-vertex polytopal $d$-sphere, by Lemma \ref{lem:1pt}, $S^{\hspace{.1mm}\prime}$ is an $n$-vertex polytopal $d$-sphere. Since ${\rm lk}_S(v)\subset {\rm ast}_S(v) \subset \widetilde{S}$, ${\rm st}_S(v) = \{v\} \ast {\rm lk}_S(v)  \subset \{v\} \ast \widetilde{S} \subset S^{\hspace{.1mm}\prime}$. Also, ${\rm ast}_S(v) \subset \widetilde{S} \subset \Sigma_{u, v}\widetilde{S} = S^{\hspace{.1mm}\prime}$. Thus, $S= {\rm st}_S(v) \cup {\rm ast}_S(v) \subset S^{\hspace{.1mm}\prime}$. This completes the proof. 
\end{proof} 
 

\begin{proof}[Proof of Theorem \ref{th:flag}]  
Suppose  $S = S_1\ast \cdots\ast S_m$ is a join of $m$ combinatorial spheres, where $S_i$ is an $n_i$-vertex combinatorial $d_i$-sphere for $1\leq i\leq m$  and $m\geq 2$. Then $d -1 = \dim(S) = d_1+\cdots + d_m + m-1$ and $n=n_1+\cdots+n_m \geq (d_1+2)+\cdots+ (d_m+2) = d_1+\cdots + d_m +2m=d+m \geq d+2$. For $1\leq i\leq m$, choose $v_i\in V(S_i)$. By Lemma \ref{lem:ball} $(e)$, $D_i:= \{v_i\}\ast{\rm ast}_{S_i}(v_i)$ is a combinatorial $(d_i+1)$-ball and $\partial(D_i)=S_i$. Therefore, $B_1 := D_1\ast S_2\ast\cdots\ast S_m$,  $B_2 := S_1\ast D_2\ast S_3\ast\cdots\ast S_m$ are combinatorial $d$-balls and have common boundary $S_1\ast \cdots\ast S_m = S$. Hence, by Lemma \ref{lem:ball} $(c)$, $S^{\hspace{.1mm}\prime} := B_1\cup B_2$ is a combinatorial $d$-sphere and $S = \partial B_1 \subset B_1 \subset S^{\hspace{.1mm}\prime}$. Clearly, $V(S^{\hspace{.1mm}\prime}) = V(B_1)=V(S)$. This proves part $(a)$. 

Suppose $S$ has  a vertex $v$ of degree $d$. Then ${\rm lk}_S(v) = S^{d-2}_{d}(\sigma)$, for some $\sigma \subset V(S)$. If $\sigma$ is a face of $S$ then the standard $(d-1)$-sphere ${\rm st}_S(v)\cup\{\sigma\}$ is a proper subcomplex of $S$. This is not possible since both $S$ and ${\rm st}_S(v)\cup\{\sigma\}$ are $(d- 1)$-pseudomanifolds without boundary.  So, $\sigma$ is  not a face of $S$. Let $\widetilde{S}$ be  obtained from $S$ by removing all the faces containing $v$ and  adding $\sigma$ as a new $(d-1)$-simplex (i.e., $\widetilde{S} = (S - {\rm st}_S(v)) \cup \{\sigma\}$). (We say that $\widetilde{S}$ is obtained from $S$ by the {\em bistellar $(d-1)$-move} $v\mapsto \sigma$. This is also known as {\em collapsing} the vertex $v$.)
Then $\widetilde{S}$ is an $(n-1)$-vertex combinatorial $(d-1)$-sphere and $v\not\in V(\widetilde{S})$. Observe that $\widetilde{S} = {\rm ast}_S(v)\cup\{\sigma\}$. Let $u$ be a vertex of $\widetilde{S}$. Let $S^{\hspace{.1mm}\prime}$  be the one-point suspension $\Sigma_{u,v}(\widetilde{S})$. Then $S^{\hspace{.1mm}\prime}$ is a combinatorial $d$-sphere and $V(S^{\hspace{.1mm}\prime}) = V(S)$. Since $\sigma\in \widetilde{S}$, it follows that $\{v\}\cup\sigma\in S^{\hspace{.1mm}\prime}$ and hence $\{v\}\ast S^{d-2}_{d}(\sigma) \subset S^{\hspace{.1mm}\prime}$. Also, ${\rm ast}_S(v) \subset \widetilde{S} \subset S^{\hspace{.1mm}\prime}$. Therefore, $S = {\rm st}_S(v) \cup {\rm ast}_S(v) = (\{v\}\ast S^{d-2}_{d}(\sigma)) \cup {\rm ast}_S(v) \subset S^{\hspace{.1mm}\prime}$. This proves part $(b)$.

\begin{figure}[ht]

\tikzstyle{ver}=[]
\tikzstyle{vert}=[circle, draw, fill=black!100, inner sep=0pt, minimum width=4pt]
\tikzstyle{vertex}=[circle, draw, fill=black!00, inner sep=0pt, minimum width=4pt]
\tikzstyle{edge} = [draw,thick,-]
\centering

\begin{tikzpicture}[scale=0.24, thick]

\def\mypath{(14, 5) -- (10, 2) -- (6, 2) -- (2, 4) -- (0, 8) -- (0, 12) -- (2, 16) -- (6, 18) -- (10, 18) -- (14, 15) -- (18, 10) -- (14, 5)}

\fill   [gray, opacity=.3]                                \mypath; 

\draw (14, 5) -- (10, 2) -- (6, 2) -- (2, 4) -- (0, 8) -- (0, 12) -- (2, 16) -- (6, 18) -- (10, 18) -- (14, 15) -- (10.5, 12) -- (10, 18) -- (7, 14) -- (6, 18);

\draw  (2, 16) -- (7, 14) -- (10.5, 12) -- (10.5, 8) -- (14, 5);

\draw (10, 2) -- (10.5, 8) -- (6, 11) -- (7, 14) -- (0, 12) -- (6, 11) -- (0, 8) -- (4, 7) -- (2, 4); 

\draw (10.5, 12) -- (6, 11) -- (4, 7) -- (10.5, 8) -- (8, 6) -- (4, 7) -- (6, 2) -- (8, 6) -- (10, 2);

\draw (14, 15) -- (18, 10) -- (10.5, 12); \draw (14, 5) -- (18, 10) -- (10.5, 8);

\draw[dashed] (14, 5) -- (15, 10) -- (14, 15); 

\draw[dashed] (18, 10) -- (15, 10); 

\node[ver] () at (14, 10) {$u$};   \node[ver] () at (18.3, 9) {$v$}; 

\node[ver] () at (15, 3) {$S$}; 

\end{tikzpicture} 
\hspace{1mm}
\begin{tikzpicture}[scale=0.24, thick]

\def\mypath{(14, 5) -- (10, 2) -- (6, 2) -- (2, 4) -- (0, 8) -- (0, 12) -- (2, 16) -- (6, 18) -- (10, 18) -- (14, 15) -- (10.5, 12) -- (10.5, 8) -- (14, 5)}

\fill   [gray, opacity=.3]                                \mypath;

\draw (14, 5) -- (10, 2) -- (6, 2) -- (2, 4) -- (0, 8) -- (0, 12) -- (2, 16) -- (6, 18) -- (10, 18) -- (14, 15) -- (10.5, 12) -- (10, 18) -- (7, 14) -- (6, 18);

\draw  (2, 16) -- (7, 14) -- (10.5, 12) -- (10.5, 8) -- (14, 5);

\draw (10, 2) -- (10.5, 8) -- (6, 11) -- (7, 14) -- (0, 12) -- (6, 11) -- (0, 8) -- (4, 7) -- (2, 4); 

\draw (10.5, 12) -- (6, 11) -- (4, 7) -- (10.5, 8) -- (8, 6) -- (4, 7) -- (6, 2) -- (8, 6) -- (10, 2); 

\draw (14, 15) -- (15, 10) -- (14, 5); 

\node[ver] () at (14, 10) {$u$}; \node[ver] () at (15, 3) {$D_2$}; 

\def\mypath{(17.5, 8) -- (21, 5) -- (22, 10) -- (21, 15) -- (17.5, 12) -- (17.5, 8)}

\fill[gray, opacity=.5]                                \mypath; 


\draw (17.5, 12) -- (22, 10) -- (17.5, 8) -- (21, 5) -- (22, 10) -- (21, 15) -- (17.5, 12) -- (17.5, 8); 

 \node[ver] () at (23, 10) {$u$};  \node[ver] () at (21, 3) {$D_3$};


\def\mypath{(28, 15) -- (24.5, 12) -- (24.5, 8) -- (28, 5) -- (32, 10) -- (28, 15)}

\fill   [gray, opacity=.3]                                \mypath; 



\draw (24.5, 12)  -- (32, 10) -- (28, 15) -- (24.5, 12) -- (24.5, 8) -- (28, 5) -- (32, 10) -- (24.5, 8); 

\draw[dashed] (28, 5) -- (29, 10) -- (28, 15); \draw[dashed] (32, 10) -- (29, 10); 


\node[ver] () at (27.5, 10) {$u$};  \node[ver] () at (32.3, 9) {$v$}; 

 \node[ver] () at (28, 3) {$D_1$};

\end{tikzpicture}
\caption{{Sphere $S$ and discs $D_1$, $D_2$,  $D_3$}} \label{Flag}
\end{figure}
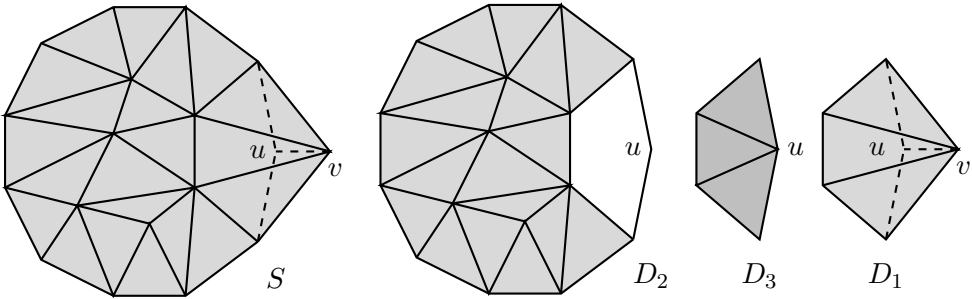

Suppose $S$ is a flag sphere of dimension $d-1$.  If $d-1=1$ then the result follows from Theorem \ref{th:polytopal} and also from part (b) above. So, assume that $d-1\geq 2$. Let $v\in V(S)$. Then $L := {\rm lk}_S(v)$ is a combinatorial sphere of dimension $d-2\geq 1$. If $L$ is a standard sphere, say $L= S^{d-2}_{d}(U)$, then $U\cup\{v\}$ is a clique and hence a face of dimension $d$, a contradiction. So, $L$ is not a standard sphere and hence $\deg_S(v) >d$. This implies that $n \geq d+2$. (In fact, if $n=d+2$ then $\deg_S(u)=d+1$ for all $u\in V(S)$ and $V(S)$ is a clique. Hence $V(S)$ is a $(d+ 1)$-simplex of $S$, a contradiction. So, $n\geq d+3$.)  Let $u\in V(L)$. Let $D_1 := {\rm st}_S(v)$, $D_2 := {\rm ast}_S(v)$ and $D_3 := \{u\}\ast {\rm ast}_L(u)$. Then, by Lemma \ref{lem:ball} $(d)$, $(e)$,  $D_1, D_2, D_3$ are combinatorial $(d-1)$-balls and $\partial D_1 = \partial D_2 = \partial D_3 = L = D_1\cap D_2 = D_1\cap D_3$. Suppose $\alpha\in D_2\cap D_3$. If $\alpha = \{w\}$ is a 0-simplex then $\alpha\in L$. If $\alpha$ is not a 0-simplex then $\alpha$ is a clique. Since the vertices of $\alpha$ are in $L= {\rm lk}_S(v)$, it follows that $\alpha\cup\{v\}$ is a clique in $S$ and hence a face of $S$. Therefore $\alpha\in L$. This implies that $D_2 \cap D_3\subseteq L$ and hence $D_2 \cap D_3 = L$. Therefore, by Lemma \ref{lem:ball} $(c)$, $S_1 := D_1\cup D_3$ and $S_2 := D_2\cup D_3$ are combinatorial $(d-1)$-spheres. (In Fig. \ref{Flag}, we present an example for $d-1=2$.)  Let $B_1 := \{v\}\ast {\rm ast}_{S_1}(v) = \{v\}\ast D_3$ and $B_2 := \{u\}\ast {\rm ast}_{S_2}(u)$. Then $B_1$, $B_2$ are combinatorial $d$-balls and $B_1\cap B_2 = D_3$. Hence $B := B_1\cup B_2$ is a combinatorial $d$-ball and $\partial B = D_1\cup D_2=S$. Let $C := \{v\}\ast {\rm ast}_{S}(v) = \{v\}\ast D_2$. Then $C$ is a combinatorial $d$-ball and $\partial C= S$. If $\beta\in C \setminus S$ then $\beta$ is of the form $\beta = \{v\}\cup \alpha$ for some $\alpha\in {\rm ast}_S(v) \setminus {\rm lk}_S(v)$. Then $\beta \not\in B_1\cup B_2 = B$. This implies that $B\cap C \subseteq S$ and hence $B\cap C = S$. Therefore, by Lemma \ref{lem:ball} $(c)$, $S^{\hspace{.1mm}\prime} := B\cup C$ is a combinatorial $d$-sphere. Clearly, $S\subset C \subset S^{\hspace{.1mm}\prime}$ and $V(S^{\hspace{.1mm}\prime}) = V(S)$. This proves part $(c)$. 
\end{proof} 

\begin{remark} \label{remark:flag} 
{\rm Let $D_2, D_3, B_1=\{v\}\ast D_3, B_2, C, S_1, S^{\hspace{.1mm}\prime}$ be as in the proof of Theorem \ref{th:flag} $(c)$. Then,  $B_1\cup C= \{v\}\ast (D_3\cup D_2) = \{v\}\ast S_2$ and $B_2 = \{u\}\ast {\rm ast}_{S_2}(u)$. Therefore, $S^{\hspace{.1mm}\prime} = 
B\cup C =   B_2\cup (B_1 \cup C)  = (\{u\}\ast {\rm ast}_{S_2}(u)) \cup (\{v\}\ast S_2) 
= \Sigma_{u,v}(S_2)$, an one-point suspension of the $(n-1)$-vertex combinatorial $(d-1)$-sphere $S_2$. This gives another (combinatorial) description of $S^{\hspace{.1mm}\prime}$. In the proof, we have given more geometric arguments, namely, we have shown that $S^{\hspace{.1mm}\prime}$ as the union of two combinatorial $d$-balls $B$,  $C$ with common boundary $S$. 
}
\end{remark}

\begin{proof}[Proof of Lemma \ref{lem:stacked}]
Since $B$ is an $n$-vertex stacked $d$-ball, there exists a sequence $B_1, \dots, B_m$ of combinatorial $d$-balls, such that $B_1=\overline{\sigma}_1$ is a standard $d$-ball, $B_m=B$ and, for $2 \leq i \leq m$,   $B_i = B_{i-1} \cup \overline{\sigma}_i$ and $B_{i-1} \cap \overline{\sigma}_i =\overline{\tau}_i$ where $\sigma_i$ is a facet in $B_i$ and $\tau_i$ is a ridge in $B_i$. By Lemma \ref{lem:st_ball}, $m = f_{d}(B) = n-d$. Then, by definition,  $C := B_{m-1}$ is a stacked $d$-ball with $m-1$ facets and $n-1$ vertices. Suppose $\sigma_m= \tau_m \cup\{u\}$. Then $V(C) = V(B)\setminus\{u\}$. Let $B^{\hspace{.1mm}\prime} = \{u\}\ast C$. Then, by the definition, $B^{\hspace{.1mm}\prime}$ is a stacked $(d+1)$-ball. (In fact,  $\Lambda(B^{\hspace{.1mm}\prime})$ is isomorphic to $\Lambda(C)$ and hence is a tree. Clearly, $B^{\hspace{.1mm}\prime}$ is a pure $(d+1)$-dimensional simplicial complex with $f_0(B^{\hspace{.1mm}\prime}) = n = m+d$ and $f_{d+1}(B^{\hspace{.1mm}\prime}) = f_{d}(C) = m-1$. Hence, by Lemma \ref{lem:st_ball}, $B^{\hspace{.1mm}\prime}$ is a stacked $(d+1)$-ball.) Then $S := \partial B^{\hspace{.1mm}\prime}$ is an $n$-vertex stacked $d$-sphere. Since $B^{\hspace{.1mm}\prime} = \{u\}\ast C$, it follows that $C \subset \partial B^{\hspace{.1mm}\prime}$. Since $\sigma_m\not\in C$, $\tau_m$ is in one facet of $C$ and hence in $\partial C$. This implies that $\sigma_m= \{u\}\cup \tau_m \in \{u\}\ast\partial C\subset \partial B^{\hspace{.1mm}\prime}$. Therefore, $B = C\cup \overline{\sigma}_m \subset \partial B^{\hspace{.1mm}\prime}=S$. This completes the proof. 
\end{proof}


\begin{proof}[Proof of Theorem \ref{th:ball}]
Let $u$ be a vertex of degree $d$ in  $B$. So, ${\rm lk}_B(u)$ has $d$ vertices and hence (since ${\rm lk}_B(u)$ is either a combinatorial $(d-1)$-ball or combinatorial $(d-1)$-sphere)  ${\rm lk}_B(u)$ is a standard $(d-1)$-ball, say $\bar{\tau}$. Then $\sigma := \{u\}\cup\tau\in B$. Since $n\geq d+2$, $B \neq \overline{\sigma}$. Then $\sigma$ has a neighbour in $\Lambda(B)$ and hence  $\tau$ is an interior ridge. Since $M := \partial B$ is a combinatorial $(d-1)$-sphere, this implies that ${\rm lk}_{M}(u)= S^{d-2}_d(\tau)$.  Let $C := \{u\}\ast {\rm ast}_{M}(u)$. Then $C$ is a combinatorial $d$-ball and $\partial C = M$. If $\beta\in C \setminus M$ then $\beta$ is of the form $\beta = \{u\} \cup \alpha$ for some $\alpha\in {\rm ast}_{M}(u) \setminus {\rm lk}_{M}(u)$. So, $\alpha \not\in S^{d-2}_d(\tau)$ and $\alpha\neq \tau$. Hence $\beta\not\in  B$. This implies that $B\cap C \subseteq M$ and hence $B\cap C = M$. Therefore, by Lemma \ref{lem:ball} $(c)$, $S := B\cup C$ is a combinatorial $d$-sphere. Clearly,  $V(S) = V(B)$. This proves part $(a)$. 

\medskip

\begin{figure}[ht]

\tikzstyle{ver}=[]
\tikzstyle{vert}=[circle, draw, fill=black!100, inner sep=0pt, minimum width=4pt]
\tikzstyle{vertex}=[circle, draw, fill=black!00, inner sep=0pt, minimum width=4pt]
\tikzstyle{edge} = [draw,thick,-]
\centering


\begin{tikzpicture}[scale=2, thick]

\def\mypath{(.707, -.707) -- (1, 0) -- (.707, .707) -- (0, 1) -- (-.707, .707) -- (-1, 0) -- (-.707, -.707) -- (0, -1)}

\fill   [gray, opacity=.25]                                \mypath; 

\draw (.707, -.707) -- (1, 0) -- (.707, .707) -- (0, 1) -- (-.707, .707) -- (-1, 0) -- (-.707, -.707) -- (0, -1);

\draw[dotted] (0, -1) -- (.707, -.707); 

\draw[dashed]  (1, 0)  -- (0, 1); \draw[dashed] (.707, .707) -- (-.707, .707) ;

\node[ver] () at (.9, -0.7) {$v_m$}; \node[ver] () at (1.2, 0) {$v_1$}; \node[ver] () at (.9, .72) {$v_2$}; 

\node[ver] () at (.2, 1.05) {$v_3$}; \node[ver] () at (-.8, .8) {$v_4$}; \node[ver] () at (-1.13, .1) {$v_5$}; 
\node[ver] () at (-.83, -.75) {$v_6$}; 


\end{tikzpicture}
\caption{{Disc with $m$ boundary vertices}} \label{Disc}
\end{figure}
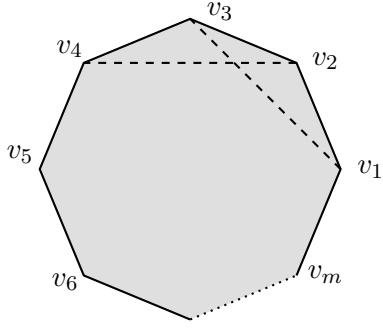

Let $B$ be a combinatorial disc with $n\geq 4$ vertices. So,  $\partial B$ is a cycle. 

\medskip

\noindent {\em Claim.} If $D_m$ is an $n$-vertex combinatorial disc whose boundary is an $m$-cycle and  $m\geq 4$, then $D_m$ is a subcomplex of an $n$-vertex combinatorial disc $D_{m-1}$ whose boundary is an $(m-1)$-cycle.  

\medskip 

Assume that $\partial D_m$ is the cycle $v_1\mbox{-}v_2\mbox{-}\cdots\mbox{-}v_m\mbox{-}v_1$. Then both $v_1v_3$ and $v_2v_4$ can not be edges of $D_m$ (see Fig. \ref{Disc}). (If $v_1v_3$ is an edge then $|D_m|\setminus |v_1v_3|$ is disconnected, one component contains $v_2$ and other component contains $v_4$ and hence $v_2v_4$ can not be an edge of $D_m$. Similarly, if $v_2v_4$ is an edge then $v_1v_3$ can not be an edge.) Assume without loss that $v_1v_3$ is a non-edge. Let $D_{m-1} := D_m\cup \overline{v_1v_2v_3} = D_m \cup \{v_1v_3, v_1v_2v_3\} $. Since $D_m\cap \overline{v_1v_2v_3}$ is a path, by Lemma \ref{lem:ball} $(b)$,  $D_{m-1}$ is a combinatorial disc. This proves the claim. 

\smallskip

By the claim, it follows that the $n$-vertex disc $B$ is a subcomplex of an $n$-vertex disc $D_3$ whose boundary is a 3-cycle. Suppose $\partial D_3 = u_1\mbox{-}u_2\mbox{-}u_3\mbox{-}u_1$. So, $u_1u_2, u_2u_3, u_1u_3$ are boundary edges. If $\alpha := u_1u_2u_3$ is a face of $D_3$ then, since each $u_1u_2, u_2u_3, u_1u_3$ is in exactly one facet,  $\alpha$ is an isolated vertex in $\Lambda(D_3)$. Since $D_3$ is a 2-pseudomanifold, it follows that $D_3 = \overline{\alpha}$. This is not possible since $n\geq 4$. So, $\alpha$ is not a face of $D_3$. Let $S := D_3 \cup \overline{\alpha}$. Then $B \subseteq D_3\subset S$ and $V(S) = V(D_3)= V(B)$. By Lemma \ref{lem:ball} $(c)$, $S$ is a combinatorial 2-sphere. These prove part $(b)$. 
\end{proof}

\begin{remark} \label{remark:delete} 
{\rm Let $B, u, \sigma, S$ be as in the proof of Theorem \ref{th:ball} $(a)$ above. Then, the induced subcomplex $B\setminus u:= B[V(B)\setminus\{u\}] = B- \overline{\sigma} = {\rm ast}_S(u)$.  Hence $B\setminus u$ is  an $(n-1)$-vertex combinatorial $d$-ball and $S= (B\setminus u)\cup (\{u\}\ast\partial(B\setminus u))$.
}
\end{remark}


\section{Some Examples}\label{example}

There are exactly two non-polytopal $3$-sphere on 8 vertices.  Here we show that Question \ref{qn:sphere} has affirmative answer  for these two non-polytopal spheres. 
Since a simplicial complex is uniquely determined by its maximal simplices, we identify a pure simplicial complex $X$ with the set of facets in $X$. 

\begin{example} \label{eg:GS} 
{\rm Consider the $8$-vertex neighbourly combinatorial $3$-sphere    
\begin{align*}
{\mathcal M}^3_8 = & \{1234, 1237, 1267, 1347, 1567, 2345,  2367,  3456, 3467, 4567, \\ 
& ~~ 1248, 1268, 1478, 1568, 1578, 2358, 2368,  2458, 3568,  4578\}. 
\end{align*}
It is a non-polytopal $3$-sphere and found by Gr\"{u}nbaum and Sreedhardan in \cite{GS1967}. Let ${\mathcal D} := {\rm ast}_{{\mathcal M}^3_8}(8)$. Then ${\mathcal D}$ is a combinatorial $3$-ball. Let ${\mathcal C} := \{1245, 1256, 1457, 2356\}$. Then ${\mathcal C}$ is a combinatorial $3$-ball and $\partial {\mathcal C} = \{157, 156, 356, 235, 245, 457, 147, 124, 126, 236\}  = {\rm lk}_{{\mathcal M}^3_8}(8) = \partial {\mathcal D}$. Therefore ${\mathcal S}^3_7 := {\mathcal C}\cup {\mathcal D}$ is a combinatorial $3$-sphere. Since ${\mathcal S}^3_7$ has 7 vertices, it is polytopal (cf.  \cite{BD1998}). Hence, by Lemma \ref{lem:1pt}, ${\mathcal S^4_8} := \Sigma_{7, 8}{\mathcal S}^3_7$ is a polytopal 4-sphere. Then, $\{8\}\ast \partial {\mathcal D} \subset \{8\}\ast {\mathcal S}^3_7 \subset {\mathcal S}^4_8$. Also, ${\mathcal D} \subset  {\mathcal S}^3_7 \subset {\mathcal S}^4_8$. Therefore, ${\mathcal M}^3_8 = {\rm st}_{{\mathcal M}^3_8}(8) \cup {\rm ast}_{{\mathcal M}^3_8}(8) = (\{8\}\ast \partial {\mathcal D}) \cup {\mathcal D} \subset {\mathcal S}^4_8$. Thus, ${\mathcal M}^3_8$ is a subcomplex of the polytopal 4-sphere ${\mathcal S}^4_8$ and $V({\mathcal S}^4_8)  = \{1,  \dots, 8\}= V({\mathcal M}^3_8)$.

For $d\geq 4$, choose $d-3$ distinct new symbols $v_9, \dots, v_{d+5}\not\in\{1, \dots, 8\}$. Let ${\mathcal M}^d_{d+5} := \Sigma_{7, v_{d+5}}(\cdots (\Sigma_{7, v_{9}}({\mathcal M}^3_8)))$ and ${\mathcal S}^{d+1}_{d+5} := \Sigma_{7, v_{d+5}}(\cdots (\Sigma_{7, v_{9}}({\mathcal S}^4_8)))$. Since ${\mathcal M}^3_{8} \subset {\mathcal S}^{4}_{8}$, inductively we get ${\mathcal M}^d_{d+5} \subset {\mathcal S}^{d+1}_{d+5}$ and  $V({\mathcal S}^{d+1}_{d+5})  = \{1,  \dots, 8, v_9, \dots, v_{d+5}\}= V({\mathcal M}^d_{d+5})$.
By Lemma \ref{lem:1pt}, ${\mathcal M}^d_{d+5}$ is a non-polytopal $d$-sphere and $V({\mathcal S}^{d+1}_{d+5})$ is a polytopal $(d+1)$-sphere. Thus, for each $d\geq 3$, there exists a  non-polytopal $d$-sphere  for which Question \ref{qn:sphere} has affirmative answer. 
}
\end{example}

\begin{example} \label{eg:Ba} 
{\rm Consider the $8$-vertex non-neighbourly combinatorial $3$-sphere    
\begin{align*}
{\mathcal M}^{\prime} = & \{2458, 2358, 1368, 1468, 2457, 2367, 1367, 1457, 1248, 1238, \\
& ~~~~ 1347, 2347, 1234, 2567, 1567, 1456, 4568, 3568, 2356 \}. 
\end{align*}
It is a non-polytopal $3$-sphere and found by Barnette  in \cite{Ba1970}. Observed that ${\mathcal M}^{\prime}$ is a subcomplex of the 8-vertex combinatorial 4-sphere ${\mathcal S}  := S^0_2(\{7, 8\}) \ast S^1_3(\{1, 2, 5\}) \ast S^1_3(\{ 3, 4, 6\})$. Thus,  Question \ref{qn:sphere} has affirmative answer for the non-polytopal 3-sphere ${\mathcal M}^{\prime} $. 

}
\end{example}

\begin{example} \label{eg:nonpolytopal} 
{\rm Let ${\mathcal M}^3_8$ and ${\mathcal D}= {\rm ast}_{{\mathcal M}^3_8}(8)$ be as in Example \ref{eg:GS}.  Then,  by Lemma \ref{lem:ball} $(b)$, $B := {\mathcal D}\cup \overline{1248}$ is a combinatorial $3$-ball with 8 vertices and $\deg_B(8)=4$. Let $S$ be the $8$-vertex combinatorial sphere containing  $B$ by our method as in the proof of Theorem \ref{th:ball} $(a)$. Since $B\setminus 8 = {\mathcal D} = {\rm ast}_{{\mathcal M}^3_8}(8)$, it follows that  $S = {\mathcal M}^3_8$  (see Remark \ref{remark:delete}). So,  our construction of a combinatorial $3$-sphere containing a combinatorial $3$-ball on the same vertex-set produces a non-polytopal $3$-sphere.}
\end{example}


\medskip

\noindent {\bf Acknowledgements:} The author thanks Mathematisches Forschung Institute Oberwolfach \& the organisers of Workshop 39/2019 for invitation and Tata Trust for travel support to attend the workshop at MFO. The author is also supported by SERB, DST (Grant No.\,MTR/2017\!/000410) and the UGC Centre for Advance Studies. The author thanks Francisco Santos for made him aware of their proof of Theorem \ref{th:polytopal} in \cite{CS2020}.


{\small

}

\end{document}